\newcounter{maincounter}
\newtheorem{theorem}[maincounter]{Theorem}
\newtheorem{prop}[maincounter]{Proposition}
\newtheorem{lemma}[maincounter]{Lemma}
\theoremstyle{definition}
\newcommand{\given}{\mathbin{|}}
\newcommand{\R}{\mathbb{R}}
\newcommand{\N}{\mathbb{N}}
\renewcommand{\d}{\hspace{1pt} \mathrm{d}}
\newcommand{\F}{\mathcal{F}}
\renewcommand{\P}{\operatorname{P}}
\newcommand{\partition}[1]{\mathcal{#1}}
\newcommand{\E}{\operatorname{E}}
\newcommand{\J}{\mathcal{J}}
\newcommand{\from}{\colon}
\newcommand{\banach}[1]{\mathbf{#1}}
\renewcommand{\c}{^{\mathsf{c}}}
\newcommand{\indic}[1]{\mathbf{1}(#1)}
\newcommand{\one}{\mathbbm{1}}
\title{State occupation probabilities \\ in non-Markov models}
\author{
Morten Overgaard \\
\small Department of Public Health, Aarhus University, \\
\small Bartholins All\'{e} 2, DK-8000 Aarhus C, Denmark \\
\small moov@ph.au.dk
}
\date{May 31, 2019}
\begin{document}

\maketitle

\begin{abstract}
The consistency of the Aalen--Johansen-derived estimator of state occupation probabilities in non-Markov multi-state settings is studied and established via a new route. 
This new route is based on interval functions and relies on a close connection between additive and multiplicative transforms of interval functions, which is established.
Under certain assumptions, the consistency follows from explicit expressions of the additive and multiplicative transforms related to the transition probabilities as interval functions, which are obtained, in combination with certain censoring and positivity assumptions. \\[12pt]
\textbf{Keywords:} Aalen--Johansen estimator, interval function, additive transform, multiplicative transform, product integral.
\end{abstract}

\section{Introduction}

The Aalen--Johansen estimator of transition probabilities in multi-state models and the derived estimator of state occupation probabilities are known to be consistent when the Markov property holds for the multi-state process, which may be subject to independent censoring.
A result by \cite{Datta2001} is that the estimator of state occupation probabilities derived from the Aalen--Johansen estimator remains valid under standard assumptions even in the non-Markov case.
Some steps of the argument seem to rely on martingale properties of certain processes.
Although these processes are martingales in a Markov setting, it is not clear to this author that they retain the necessary martingale properties generally in a non-Markov setting.
In any case, it is of interest to establish the same result without the use of martingale arguments.

In this paper, the consistency of the Aalen--Johansen-derived estimate of state occupation probabilities is established by appealing to a simple identity for the state occupation probability and results on additive and multiplicative transforms of interval functions that are established.
This approach offers further insights into why the consistency continues to hold in the non-Markov case.

\section{The multi-state setting}
\label{sec:multi_state}

Consider a càdlàg multi-state process $U$ with state space $\{1, \dots, d\}$ and time parameter space $[0, \infty)$. The state occupation probabilities are given by the row vector $p(s)$ with entries $p_j(s) = \P(U(s) = j)$. With the definition $P_{jk}(s,t) = \P(U(t) = k \given U(s) = j)$, a transition matrix is defined by $P(s,t) = \{P_{jk}(s,t)\}$.
The conditional probability is $P_{jk}(s,t) = \P(U(t) = k, U(s) = j)/p_j(s)$ when $p_j(s) > 0$ and taken to be $P_{jk}(s,t) = \indic{j=k}$ otherwise.
We define a cumulative transition hazard by $\Lambda_{jk}(s,t) = \int_s^t p_j(u-)^{-1} F_{jk}(\d u)$ for $k \neq j$ where $F_{jk}(s) = \E(\# \{u \in (0,s] \given U(u) = k, U(u-) = j\})$ is the expected number of direct transitions from $j$ to $k$ up to time $s$.
A cumulative transition hazard matrix $\Lambda$ is then defined by having $\Lambda_{jk}$ as the $(j,k)$ entry when $k \neq j$ and $-\sum_{k \neq j} \Lambda_{jk}$ as the $(j,j)$ entry.

With full information on independent replications of the multi-state process $U$, the natural estimator of the state occupation probability $p(t)$ is an average of the state occupation indicators over replications. If $n$ independent replications $U_1, \dots, U_n$ of $U$ are observed, take the estimate $\hat p_n(t)$ which has entries $\hat p_{n,j}(t) = n^{-1} \sum_{i=1}^n \indic{U_i(t) = j}$.
Estimation is complicated by censoring of the multi-state process $U$. Consider a multi-state process $X$ with state space $\{0, \dots, d\}$ fulfilling $X(t) = U(t)$ when $X(t) \neq 0$. Then $X$ can be considered a censored version of $U$ with $X(t) = 0$ denoting that $U(t)$ is unobserved.
The state 0 may or may not be absorbing for $X$.
Generally, but perhaps especially when 0 is not absorbing for $X$, the term filtering rather than censoring of $U(t)$ for the case $X(t) = 0$ may be more in line with the usual terminology, for instance with the terminology from \cite{Andersen1993}.

Consider $n$ independent replications $X_1, \dots, X_n$ of $X$ as the observed information. Let $N_{jk}\c(s) = \#\{u \in (0,s] \given X(u) = k, X(u-) = j\}$ denote the number of transitions from state $j$ to $k$ and let $F_{jk}\c(s) = \E(N_{jk}\c(s))$ denote the mean. For replication $i$, we let $N_{i,jk}\c(s) = \#\{u \in (0,s] \given X_i(u) = k, X_i(u-) = j\}$, and an empirical mean is defined by $\hat F_{n,jk}\c(s) = n^{-1} \sum_{i=1}^n N_{i,jk}\c(s)$.
Similarly, we use $Y_j\c(s) = \indic{X(s) = j}$ for state occupation with expectation $p_j\c(s) = \E(Y_j\c(s))$. For replication $i$, we let $Y_{i,j}\c(s) = \indic{X_i(s) = j}$, and an empirical mean is defined by $\hat p_{n,j}\c (s) = n^{-1} \sum_{i=1}^n Y_{i,j}\c(s)$.
The Nelson--Aalen estimate of $\Lambda_{jk}(s,t)$ is
\begin{equation}
\hat \Lambda_{n,jk}(s,t) = \int_s^t \frac{1}{\hat p_{n,j}\c(u-)} \hat F_{n,jk}\c(\d u).
\end{equation}
With $\hat \Lambda_{n,jj}(s,t) = - \sum_{k \neq j} \hat \Lambda_{n,jk}(s,t)$, a matrix $\hat \Lambda_n(s,t) = \{\hat \Lambda_{n,jk}(s,t)\}$ can be defined.
Based on this, the Aalen--Johansen estimate of the matrix $P(s,t) = \{P_{jk}(s,t)\}$ is, as defined in \cite{AalenJohansen1978},
\begin{equation}
\hat P_{n}(s,t) = \Prodi_s^t(\one+\hat \Lambda_{n}(\d u)),
\end{equation}
where $\one$ denotes the identity matrix. The derived estimate of $p(t)$ is 
\begin{equation}
\hat p_{n}(t) = \hat p_{n}(0) \hat P_{n}(0,t)
\end{equation}
for $t > 0$ and $\hat p_{n,j}(0) = \hat p_{n,j}\c(0)/ \sum_{k=1}^d \hat p_{n,k}\c(0)$.

It should not be surprising that $\hat \Lambda_{n,jk}(s,t)$ tends to converge to $\Lambda_{jk}\c(s,t) := \int_s^t p_j\c(u-)^{-1} F_{jk}\c(\d u)$, the observable transition hazard, rather than $\Lambda_{jk}(s,t)$ as desired. 
In order for this approach to work, we make the assumption $\Lambda_{jk}\c(s,t) = \Lambda_{jk}(s,t)$ for all $s, t$ with $s \leq t$ for all $j \in \{1, \dots, d\}$ and $k \in \{1, \dots, d\} \setminus \{j\}$. This is a weak version of an independent censoring assumption and is equivalent to assuming
\begin{equation}
\begin{aligned}
    &\P(X(s-) = j, X(s) = k \given U(s-) = j, U(s) = k) \\
    & \quad = \P(X(s-) = j \given U(s-) = j)
\end{aligned}
\end{equation}
for $F_{jk}$-almost all $s$ of interest for all $j \in \{1, \dots, d\}$ and $k \in \{1, \dots, d\}\setminus\{j\}$. This can be called the status-independent observation assumption since, for fixed $j \in \{1, \dots, d\}$, it states that among the statuses of transitioning from $j$ to $k$ at time $s$ and being in state $j$ immediately before time $s$, the probability of observing such status does not depend on the status. 
This term is along the lines of \cite{overgaard2019assumption} and the equivalence mentioned can be established using the techniques of that paper.
Also, in order for $\hat p_n(0)$ to be a consistent estimate of $p(0)$, the assumption $\sum_{k=1}^d p_k\c(0) > 0$ and $p_j(0) = p_j\c(0)/\sum_{k=1}^d p_k\c(0)$ for $j=1, \dots, d$, or equivalently that $\P(X(0) = j \given U(0) = j)$, the probability of observing the initial state given that the initial state is $j$, for $j$ with $p_j(0) > 0$ is positive and does not depend on $j$, is appropriate.

\begin{prop} \label{prop:AJ_consistency}
For given $t \in (0, \infty)$, assume $\E(N_{jk}\c(t)^2) < \infty$ and that $p_j\c(u-) \geq \varepsilon$ for some $\varepsilon > 0$ for $F_{jk}$-almost all $u \in (0,t]$ for all $j \in \{1, \dots, d\}$ and $k  \in \{1, \dots, d\}\setminus \{j\}$. Also, assume that $\sum_{k=1}^d p_k\c(0) > 0$ and $p_j(0) = p_j\c(0)/\sum_{k=1}^d p_k\c(0)$ and $\Lambda_{jk}\c(s,u) = \Lambda_{jk}(s,u)$ for all $s \leq u \leq t$ for all $j \in \{1, \dots, d\}$ and $k \in \{1, \dots, d\} \setminus \{j\}$. Then 
\begin{equation}
\hat p_{n}(s) \to p(0) \Prodi_0^s(\one + \Lambda(\d u)).
\end{equation}
in probability as $n \to \infty$ uniformly for $s \in (0,t]$.
\end{prop}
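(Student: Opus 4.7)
The plan is to proceed in four layers: initial-distribution consistency, uniform convergence of the Nelson--Aalen cumulative hazard estimator to $\Lambda$, continuity of the matrix product integral, and Slutsky-type assembly. For the initial distribution, the law of large numbers applied to the bounded indicators $Y_{i,j}\c(0)$ gives $\hat p_{n,j}\c(0) \to p_j\c(0)$ in probability for each $j$; the positivity assumption $\sum_k p_k\c(0) > 0$ lets the continuous mapping theorem propagate this to $\hat p_{n,j}(0) \to p_j\c(0)/\sum_k p_k\c(0) = p_j(0)$ in probability.

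For the Nelson--Aalen step, I would first establish uniform convergence on $[0,t]$ of $\hat F_{n,jk}\c$ to $F_{jk}\c$ and of $\hat p_{n,j}\c$ to $p_j\c$. Each summand process $N_{i,jk}\c$ is monotone on $[0,t]$ with bounded second moment at $t$, and each $Y_{i,j}\c$ is bounded, so pointwise $L^2$ convergence via Chebyshev, upgraded by the usual Glivenko--Cantelli monotonicity trick, yields uniform in-probability convergence on $[0,t]$. The assumption $p_j\c(u-) \geq \varepsilon$ lets me invert the denominators with high probability, and the splitting
\begin{equation*}
\hat \Lambda_{n,jk}(0,s) - \Lambda_{jk}\c(0,s) = \int_0^s \Bigl( \frac{1}{\hat p_{n,j}\c(u-)} - \frac{1}{p_j\c(u-)} \Bigr) \hat F_{n,jk}\c(\d u) + \int_0^s \frac{(\hat F_{n,jk}\c - F_{jk}\c)(\d u)}{p_j\c(u-)}
\end{equation*}
handles the first term directly and the second by integration by parts in terms of $\sup_u |\hat F_{n,jk}\c(u) - F_{jk}\c(u)|$ and the bounded variation of $1/p_j\c$. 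This yields uniform in-probability convergence of $\hat \Lambda_n$ to $\Lambda\c$ on $[0,t]$, and the status-independent observation assumption identifies $\Lambda\c$ with $\Lambda$ there.

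Next I would invoke continuity of the matrix product integral with respect to uniform convergence of the integrand, where the modulus of continuity is controlled by the total variation of the limit $\Lambda$; this is exactly where the paper's announced correspondence between additive and multiplicative transforms of interval functions should enter, delivering
\begin{equation*}
\sup_{s \in (0,t]} \left\| \hat P_{n}(0,s) - \Prodi_0^s(\one + \Lambda(\d u)) \right\| \to 0
\end{equation*}
in probability. A Slutsky-type product of this with the initial-distribution convergence then gives the claimed uniform convergence of $\hat p_n(s) = \hat p_n(0)\hat P_n(0,s)$.

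The main obstacle is precisely this product-integral continuity step with uniform, quantitative control in $s$: $\hat \Lambda_n$ has a random total variation that must be shown to be asymptotically comparable to that of $\Lambda$ before any general continuity theorem can bite, and the approximation must be uniform in the endpoint $s$ on $(0,t]$. This is exactly what the interval-function machinery advertised in the abstract is designed to streamline, and I would expect the proof of the proposition itself to reduce to verifying the hypotheses of a separate lemma on the additive-to-multiplicative transform, with the content above merely supplying the uniform convergence of the additive transform $\hat \Lambda_n$ and the constants $\hat p_n(0)$.
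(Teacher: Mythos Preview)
Your overall four-step layout is reasonable, and the decomposition of $\hat\Lambda_{n,jk}-\Lambda_{jk}\c$ together with the integration-by-parts idea for the second term is a classical route that can be made to work. However, two things deserve correction.

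First, you have misread where the paper's interval-function machinery enters. Theorems~\ref{theorem:additive_multiplicative}--\ref{theorem:Lambda_vs_P} are \emph{not} used to prove Proposition~\ref{prop:AJ_consistency}; they are used afterwards to identify the limit $p(0)\Prodi_0^s(\one+\Lambda(\d u))$ with the true state occupation vector $p(s)$. The paper's own proof of Proposition~\ref{prop:AJ_consistency} is essentially a reference: it invokes either the functional approach of \cite{Glidden2002} or, more concretely, convergence in the $p$-variation norm for some $p\in(1,2)$ as in \cite{overgaard2019counting}. The key technical input is $\|\hat F_{n,jk}\c-F_{jk}\c\|_{[p]}\to 0$ in probability, which the second-moment assumption $\E(N_{jk}\c(t)^2)<\infty$ is there to guarantee. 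In that topology both the map $(f,g)\mapsto\int f^{-1}\,\d g$ and the product-integral map are continuous, so the whole chain $\hat F_n\c\mapsto\hat\Lambda_n\mapsto\hat P_n$ is handled in one stroke, with uniform convergence on $(0,t]$ as a by-product.

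Second, the obstacle you flag---continuity of the product integral under merely uniform convergence of $\hat\Lambda_n$---is real, and the interval-function results do not resolve it. What would close your argument is the Duhamel-type inequality from \cite{Gill1990}, which requires a uniform-in-$n$ bound on the total variation of $\hat\Lambda_n$. That bound is obtainable (the off-diagonal entries are monotone and, on the event $\inf_u \hat p_{n,j}\c(u-)\geq\varepsilon/2$, dominated by $(2/\varepsilon)\hat F_{n,jk}\c(t)$), but it is an additional step your sketch does not supply. The paper sidesteps this entirely by working in $p$-variation from the start; your sup-norm route is more elementary but needs that extra variation control made explicit, and your integration-by-parts step also quietly uses that $u\mapsto 1/p_j\c(u-)$ is of bounded variation on the relevant set, which should be justified from finiteness of the $F_{jk}\c(t)$ and the lower bound $p_j\c(u-)\geq\varepsilon$.
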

\begin{proof}
Since the Markov property is not assumed to hold, the usual martingale arguments are not expected to work. In particular, the process $N_{jk}\c(s) - \int_0^s Y_j\c(u-) \Lambda_{jk}\c(\d u)$ is not expected to be a martingale since $\Lambda_{jk}\c(s)$ is not expected to take all past information into account. 
The result can be proven by taking the functional approach of \cite{Glidden2002} in this setting. Or the result can be proven by taking a functional approach based on $p$-variation for a $p \in (1,2)$ as laid out in \cite{overgaard2019counting} since the underlying functionals are continuous in a $p$-variation setting and since $\|\hat F_{n,jk}\c - F_{jk}\c\|_{[p]} \to 0$ in probability for all $j \in \{1, \dots, d\}$ and $k \in \{1, \dots, d\}\setminus\{j\}$ for such $p$ under the assumptions where $\| \cdot \|_{[p]}$ is the $p$-variation norm. This yields the convergence in $p$-variation norm on $(0,t]$ and so in particular uniformly on $(0,t]$.
Either approach can be used to study the asymptotic properties of the estimator in more detail as is done in \cite{Glidden2002}. 
\end{proof}

We have, by the definitions, for any $s \leq t$,
\begin{equation}
    p(t) = p(s) P(s,t)
\end{equation}
and, by iterating, this leads to
\begin{equation} \label{eq:p_t_iterated}
p(t) = p(s) P(t_0, t_1) \cdots P(t_{m-1}, t_m) = p(s) \prod_{i=1}^m P(t_{i-1}, t_i)
\end{equation}
for any choice of time points $s = t_0 < \dots < t_m = t$, as also pointed out by \cite{Aalen2001}.
On the basis of~\eqref{eq:p_t_iterated} and Proposition~\ref{prop:AJ_consistency}, the remaining task of this paper is to argue that the limit over refinements $\prodi_{(0,t]} \d P := \lim \prod_{i=1}^m P(t_{i-1}, t_i)$ exists and equals $\prodi_0^t(\one+ \Lambda(\d u))$.
If this holds, by taking the limit in \eqref{eq:p_t_iterated},
\begin{equation}
p(t) = p(0) \lim \prod_{i=1}^m P(t_{i-1}, t_i) = p(0) \Prodi_0^t(\one + \Lambda(\d s)),
\end{equation}
which is consistently estimated by the Aalen--Johansen estimators of the state occupation probabilities under some assumptions according to Proposition~\ref{prop:AJ_consistency}, establishing the desired result. 
As a consequence of Theorem~\ref{theorem:Lambda_vs_P} below the limit $\lim \prod_{i=1}^m P(t_{i-1}, t_i)$ exists and equals $\prodi_0^t(\one+ \Lambda(\d u))$ as desired under an upper continuity requirement and a bounded variation requirement on the $P_{jk}$s.

On a side note, the identity \eqref{eq:p_t_iterated} for the empirical distribution with time points $t_1, \dots, t_m$ at transition times also explains why the Aalen--Johansen estimators of state occupation probabilities are simply the observed proportions in the uncensored case as also established in section IV.4.1.4 of \cite{Andersen1993}. 

\section{Interval functions and their transforms}

The concept of interval functions, as known partly from \cite{Gill1990} but especially from \cite{Dudley2010}, will be at the core of the argument presented here. Consider an interval $J \subseteq \R$ and the set of all subintervals of $J$, denoted $\J$. An interval function is a function defined on such a $\J$. The interval functions we will consider here map into $\R$ or, more generally, into the vector space of $d \times d$ matrices, $\banach{M}$, which will be equipped with the maximum norm and the standard matrix multiplication. We let $\one \in \banach{M}$ denote the identity matrix. 
With $\one$ as the identity element, $\banach{M}$ is a unital Banach algebra, satisfying $\|xy\| \leq \|x\| \|y\|$ for elements $x,y \in \banach{M}$, and $\banach{M}$ can be considered any general unital Banach algebra in the following.

We use the notation $A < B$ for intervals $A, B \in \J$ if $a < b$ for any choices of $a \in A$ and $b \in B$. 
Two types of interval functions are important here: 
\begin{itemize}
    \item An interval function $\mu \from \J \to \banach{M}$ is said to be \emph{additive} if
    \begin{equation}
        \mu(A \cup B) = \mu(A) + \mu(B)
    \end{equation}
    for any $A, B \in \J$ such that $A \cap B = \emptyset$ and $A \cup B \in \J$.
    \item An interval function $\mu \from \J \to \banach{M}$ is said to be \emph{multiplicative} if
    \begin{equation}
        \mu(A \cup B) = \mu(A)\mu(B)
    \end{equation}
    for any $A < B \in \J$ such that $A \cup B \in \J$.
\end{itemize}
Since $\banach{M}$ is not generally commutative, the order of multiplication matters in the definition of a multiplicative interval function and here the stated definition is used in line with \cite{Gill1990} but at odds with the definition preferred by \cite{Dudley2010}.

A partition $\partition{A}$ of an interval $A \in \J$ is a finite set $\partition{A} = \{A_i\}_{i=1}^m$ of subintervals $A_1 < A_2 < \dots < A_m$ of $A$ such that $\cup_{i=1}^m A_i = A$. 
A variation concept for interval functions is defined by
\begin{equation}
\| \mu \|_{(1)} = \sup_{\partition{A}} \sum_{A \in \partition{A}} \| \mu(A) \|
\end{equation}
where the supremum is over partitions $\partition{A}$ of $J$. An interval function $\mu$ is then of bounded variation when $\| \mu \|_{(1)} < \infty$.
In this case, a real-valued interval function is obtained by $\|\mu\|_{(1)}(A) := \sup_{\partition{B}} \sum_{B \in \partition{B}} \| \mu(B) \|$, where the supremum is over partitions $\partition{B}$ of~$A$.

If both $\partition{A}$ and $\partition{B}$ are partitions of an interval $A$, $\partition{B}$ is called a refinement of $\partition{A}$ if any $B \in \partition{B}$ is a subinterval of an interval in $\partition{A}$.
For an interval $A$, $|A|$ is the length of the interval, and for a partition $\partition{A} = \{A_i\}_{i=1}^m$ the mesh is $|\partition{A}| = \max_{i} |A_i|$.
Consider a function $S$ which associates any partition, $\partition{A}$, of an interval $A$ with an element $S(\partition{A}) \in \banach{M}$. 
Two notions of a limit will be of interest:
\begin{itemize}
    \item If $V \in \banach{M}$ is such that for each $\varepsilon > 0$ a partition $\partition{A}$ exists such that, for any refinement $\partition{B}$ of $\partition{A}$, $\|S(\partition{B}) - V \| < \varepsilon$ then we say that $V$ is the limit of $S$ over refinements, which is denoted by $V = \lim_{\partition{A}} S(\partition{A})$.
    
    \item If $V \in \banach{M}$ is such that for each $\varepsilon > 0$ a $\delta > 0$ exists such that, for any partition $\partition{A}$ with $|\partition{A}| < \delta$, $\|S(\partition{A}) - V \| < \varepsilon$ then $V$ is the limit of $S$ in mesh, which is denoted by $V = \lim_{|\partition{A}| \to 0} S(\partition{A})$. 
\end{itemize}

It is worth noting that if $V$ is the limit of $S$ in mesh then $V$ is also the limit of $S$ over refinements.
Another useful fact is that $V$ is a limit of $S$ in mesh, $V = \lim_{|\partition{A}| \to 0} S(\partition{A})$, if and only if $\lim_{n \to \infty} \|S(\partition{A}_n) - V\| = 0$ for any sequence of partitions $(\partition{A}_n)$ with $|\partition{A}_n| \to 0$ as $n \to \infty$.

Examples of $S$ as considered above are $S(\mu; \partition{A}) = \sum_{A \in \partition{A}} \mu(A)$ and $S(\mu; \partition{A}) = \prod_{A \in \partition{A}} \mu(A) = \mu(A_1) \mu(A_2) \cdots \mu(A_m)$ for partitions $\partition{A} = \{A_i\}_{i=1}^m$ of $J$ with $A_1 < A_2 < \cdots < A_m$ for an interval function $\mu \from \J \to \banach{M}$. 
Limits of these $S$ lead to what will be called additive and multiplicative transforms of $\mu$.

\begin{itemize}
    \item If, for a given interval function $\mu \from \J \to \banach{M}$, for any $A \in \J$, the limit over refinements of $A$, $\int_A \d \mu := \lim_{\partition{B}}\sum_{B \in \partition{B}} \mu(B)$ exists, the interval function $A \mapsto \int_A \d \mu$ is called the additive transform of $\mu$.
    
    \item If, for a given interval function $\mu \from \J \to \banach{M}$, for any $A \in \J$, the limit over refinements of $A$, $\prodi_A \d \mu := \lim_{\partition{B}}\prod_{B \in \partition{B}}\mu(B)$ exists, the interval function $A \mapsto \prodi_A \d \mu$ is called the multiplicative transform of $\mu$.
\end{itemize}

Either of the transforms will be unique when it exists.
Clearly, the additive transform, when it exists, is an additive interval function and the multiplicative transform, when it exists, is a multiplicative interval function.

At this stage it is worth noting that what we are ultimately looking for is to establish the existence of a multiplicative transform of $P$ as an interval function with an expression as a product integral. Also, the product integral $\prodi_s^t(\one + \Lambda(\d u))$ corresponds to the evaluation $\prodi_{(s,t]} \d (\one + \Lambda)$ of the multiplicative transform of $\one + \Lambda$ seen as an interval function. 

In the following, somewhat stricter versions of the additive and multiplicative transforms will be useful. 
\begin{itemize}
    \item A strict additive transform of an interval function $\mu$ is an additive interval function $\check \mu$ such that for any $\varepsilon > 0$ a partition $\partition{A}$ of $J$ exists such that
    \begin{equation}
    \sum_{B \in \partition{B}} \| \mu(B) - \check \mu(B) \| < \varepsilon
    \end{equation}
    for any refinement $\partition{B}$ of $\partition{A}$.
    
    \item A strict multiplicative transform of an interval function $\mu$ is a multiplicative interval function $\hat \mu$ such that for any $\varepsilon > 0$ a partition $\partition{A}$ of $J$ exists such that
    \begin{equation}
    \sum_{B \in \partition{B}} \| \mu(B) - \hat \mu(B) \| < \varepsilon
    \end{equation}
    for any refinement $\partition{B}$ of $\partition{A}$.
\end{itemize}
By the triangle inequality, it can be seen that a strict additive transform is, in fact, an additive transform $\check \mu(A) = \lim_{\partition{B}} \sum_{B \in \partition{B}} \mu(B) = \int_A \d \mu$, where the limit is over refinements of partitions $\partition{B}$ of $A$, for any $A \in \J$.
Similarly, Theorem~9.34 of \cite{Dudley2010} establishes that a strict multiplicative transform is a multiplicative transform $\hat \mu(A) = \lim_{\partition{B}} \prod_{B \in \partition{B}} \mu(B) = \prodi_A \d \mu$, where the limit is over refinements of partitions $\partition{B}$ of $A$, for any $A \in \J$ under the assumption that $\sup_{A \in \J} \|\mu(A)\| < \infty$, which is implied by $\|\mu - \one\|_{(1)} < \infty$, for instance. What is here called a strict multiplicative transform of $\mu$ corresponds to a multiplicative transform of $A \mapsto \mu(A) - \one$ in the terminology of \cite{Dudley2010}.

An important result is the following.
\begin{theorem} \label{theorem:additive_multiplicative}
Consider an interval function $\mu \from \J \to \banach{M}$ of bounded variation. Then $\mu$ has a strict additive transform, $\check \mu$, if and only if $\nu := \one + \mu$ has a strict multiplicative transform, $\hat \nu$. When this happens, $\hat \nu$ is also the strict multiplicative transform of $\one + \check \mu$ and $\check \mu$ is also the strict additive transform of $\hat \nu - \one$. 
\end{theorem}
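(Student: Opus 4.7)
My plan is to treat both implications uniformly via a single algebraic identity combined with a quadratic remainder estimate. The identity is the telescoping formula
\[
\prod_{i=1}^m a_i - \prod_{i=1}^m b_i = \sum_{i=1}^m \Bigl(\prod_{j<i} a_j\Bigr)(a_i - b_i)\Bigl(\prod_{j>i} b_j\Bigr),
\]
together with the uniform bound $\bigl\|\prod_j (\one + \mu(B_j))\bigr\| \leq \prod_j(1+\|\mu(B_j)\|) \leq \exp(\|\mu\|_{(1)})$, which holds because $\mu$ has bounded variation. Expanding $\prod_j(\one + \mu(B_j')) = \one + \sum_j \mu(B_j') + R^{(2)}$ with a quadratic-or-higher remainder satisfying $\|R^{(2)}\| \leq \tfrac{1}{2}\bigl(\sum_j \|\mu(B_j')\|\bigr)^2 \exp(\|\mu\|_{(1)})$ will reduce each direction to a Cauchy argument in which first-order terms are handled by the strict hypothesis and quadratic terms are made small by choosing the base partition to have small variation between isolated atoms.

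For the forward direction, I assume $\check\mu$ is the strict additive transform of $\mu$ and, given $\varepsilon,\delta>0$, choose a base partition $\partition{A}$ of $J$ satisfying (i) $\sum_{B\in\partition{B}}\|\mu(B)-\check\mu(B)\|<\varepsilon$ for every refinement $\partition{B}$, (ii) every atom of $\mu$ of norm at least $\delta$ appears as a singleton piece of $\partition{A}$, and (iii) every non-singleton piece $A\in\partition{A}$ has $\|\mu\|_{(1)}(A)<\delta$; such a partition exists by bounded variation plus the strict-additive hypothesis. Telescoping $\prod_{A\in\partition{A}}\nu(A)$ against the product over any refinement $\partition{B}$ that subdivides each $A = A_1'\cup\cdots\cup A_{k_A}'$ yields
\[
\nu(A) - \prod_j \nu(A_j') = \bigl(\mu(A) - \sum_j \mu(A_j')\bigr) - R_A^{(2)}.
\]
Singleton pieces contribute nothing; for non-singleton $A$, additivity of $\check\mu$ rewrites the first-order part as $(\mu(A)-\check\mu(A)) - \sum_j(\mu(A_j')-\check\mu(A_j'))$, whose sum over $A$ is below $2\varepsilon$, while $\sum_A\|R_A^{(2)}\| \leq \tfrac{1}{2}\delta\|\mu\|_{(1)}\exp(\|\mu\|_{(1)})$ using $\sum_A\bigl(\sum_j\|\mu(A_j')\|\bigr)^2 \leq \delta\|\mu\|_{(1)}$. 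Combined with the uniform product bound, this proves $\prod_B\nu(B)$ is Cauchy along refinements; the limit $\hat\nu(A):=\lim_{\partition{B}}\prod\nu(B)$ is multiplicative by splitting partitions, and its strict multiplicative property follows by applying the same estimate to each $B\in\partition{B}$ individually and passing to the limit inside the uniformly bounded sum $\sum_B\|\nu(B)-\hat\nu(B)\|$.

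For the reverse direction, I define $\check\mu(A):=\lim_{\partition{B}}\sum_{B\in\partition{B}}(\hat\nu(B)-\one)$ and rerun the same machinery. Multiplicativity of $\hat\nu$ gives $\hat\nu(A)=\prod_j\hat\nu(A_j')$, so
\[
(\hat\nu(A)-\one) - \sum_j(\hat\nu(A_j')-\one) = R_A^{(2)},
\]
where now $R_A^{(2)}$ is the quadratic remainder in the expansion of $\prod_j\bigl(\one+(\hat\nu(A_j')-\one)\bigr)$. The strict multiplicative hypothesis $\sum_B\|\mu(B)-(\hat\nu(B)-\one)\|<\varepsilon$ transfers bounded variation from $\mu$ to $\hat\nu-\one$ and licenses the same quadratic bound as before, so the Cauchy argument produces $\check\mu$. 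Additivity is immediate from splitting partitions, and strictness follows by summing
\[
\|\mu(B)-\check\mu(B)\|\leq\|\mu(B)-(\hat\nu(B)-\one)\|+\|(\hat\nu(B)-\one)-\check\mu(B)\|
\]
over $B\in\partition{B}$: the first piece is below $\varepsilon$ by hypothesis and the second is the summed quadratic remainder, which is of order $\delta$.

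The remaining two assertions follow by symmetry. Applying the forward direction to $\check\mu$ (which is trivially its own strict additive transform) yields a strict multiplicative transform of $\one+\check\mu$, which must coincide with $\hat\nu$ by uniqueness; the dual statement follows by applying the reverse direction to $\hat\nu-\one$. I expect the main technical obstacle to be the careful construction of $\partition{A}$ satisfying (i)--(iii) simultaneously — separating the finitely many large atoms cleanly from a continuous part whose variation can be dispersed below $\delta$ — and justifying the interchange of limit and sum in the strictness condition; once these are in place, the telescoping identity and the quadratic bound carry the rest of the argument.
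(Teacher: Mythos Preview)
Your approach differs from the paper's: the paper does not attempt a direct Cauchy argument at all, but instead invokes the known results of Gill--Johansen (1990) and Dudley--Norvai\v{s}a (2011) that an \emph{additive} bounded-variation interval function $\eta$ always admits a strict multiplicative transform of $\one+\eta$ (and conversely), and then transfers this from $\check\mu$ to $\mu$ via the one-line triangle inequality
\[
\sum_{B}\|(\one+\mu(B))-\hat\nu(B)\|\le\sum_{B}\|\mu(B)-\check\mu(B)\|+\sum_{B}\|(\one+\check\mu(B))-\hat\nu(B)\|.
\]
Your direct route via telescoping and quadratic remainders is essentially the standard way to \emph{prove} those cited results, so the strategy is reasonable in spirit, but there is a genuine gap in the execution.

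The gap is condition~(iii): it is \emph{not} achievable in general, even when $\mu$ is additive and of bounded variation. On $J=(0,1]$ take $\mu(A)=1$ whenever $\inf A=0$ and $\mu(A)=0$ otherwise. This $\mu$ is additive with $\|\mu\|_{(1)}=1$ and is its own strict additive transform; moreover $\one+\mu$ is already multiplicative, so the theorem holds trivially here. Yet every partition of $J$ has a leftmost piece with infimum $0$, and since $0\notin J$ that piece cannot be a singleton; its variation is $1$, so (iii) fails for every $\delta<1$. Your intuition that one can ``separate the finitely many large atoms cleanly from a continuous part whose variation can be dispersed below $\delta$'' breaks down because bounded variation alone --- without upper continuity at $\emptyset$, which Theorem~\ref{theorem:additive_multiplicative} does not assume --- allows variation to concentrate at a boundary point that is not an atom. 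Replacing (iii) by the analogous condition on $\check\mu$ does not help, since in this example $\check\mu=\mu$. Repairing the argument requires a different control of $R_A^{(2)}$ that does not rely on dispersing the variation piecewise; the references the paper cites supply exactly this, which is why the paper's proof outsources that step rather than redoing it.
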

\begin{proof}
Assume that $\mu$ has a strict additive transform $\check \mu$. Since, for any partition  $\partition{A}$, $\sum_{A \in \partition{A}} \|\check \mu(A)\| \leq \sum_{B \in \partition{B}} \| \mu(B) \| + \sum_{B \in \partition{B}} \|\mu(B) - \check \mu(B)\|$ for any refinement $\partition{B}$ of $\partition{A}$, we see from the properties of the strict additive transform that $\|\check \mu\|_{(1)} \leq \|\mu \|_{(1)}$.
Using the arguments of Section~2 of \cite{Gill1990} or of Chapter~9 of \cite{Dudley2010}, it can be established that if $\eta$ is an additive interval function of bounded variation then the strict multiplicative transform $\hat \xi$ of $\xi := \one + \eta$ exists, and similarly, if $\xi$ is a multiplicative interval function and $\eta := \xi - \one$ is of bounded variation then the strict additive transform $\check \eta$ of $\eta$ exists.
According to this result, $\one + \check \mu$ has a strict multiplicative transform.
Let $\hat \nu$ denote the strict multiplicative transform of $\one + \check \mu$. Then, for any partition $\partition{B}$ of $J$,
\begin{equation}
\sum_{B \in \partition{B}} \| \one + \mu(B) - \hat \nu(B)\| \leq \sum_{B \in \partition{B}} \| \mu(B) - \check \mu(B)\| + \sum_{B \in \partition{B}} \|\one + \check \mu(B) - \hat \nu(B)\|.
\end{equation}
For any $\varepsilon > 0$, we can find a partition $\partition{A}$ such that either term on the right-hand side is smaller than $\varepsilon/2$ whenever $\partition{B}$ is a refinement of $\partition{A}$. In particular, this shows that the strict multiplicative transform of $\nu = \one + \mu$ exists and corresponds to $\hat \nu$, the strict multiplicative transform of $\one + \check \mu$. The other implication is shown in a similar fashion, where it is important to note that if the multiplicative transform $\hat \nu$ of $\one + \mu$ exists then, for any partition~$\partition{A}$,
\begin{equation}
\sum_{A \in \partition{A}} \| \hat \nu(A) - \one \| \leq \exp(\sum_{B \in \partition{B}}\|\hat \nu(B) - \one \|) \sum_{B \in \partition{B}}\| \hat \nu(B) - \one\|
\end{equation} 
for any refinement $\partition{B}$ of $\partition{A}$ since $\hat \nu(A) - \one = \sum_{i=1}^m \prod_{j=1}^{i-1} \hat \nu(B_j)(\hat \nu(B_i) - \one)$ by multiplicativity if $\{B_i\}_{i=1}^m$ is a partition of $A$ and since $\|\hat \nu(B)\| \leq 1 + \|\hat \nu(B) - \one\| \leq \exp(\|\hat \nu(B) - \one\|)$ such that $\|\hat \nu - \one\|_{(1)} \leq \exp(\|\mu\|_{(1)}) \|\mu\|_{(1)} < \infty$. 
\end{proof}

An interval function $\mu$ is said to be upper continuous if, for all $A \in \J$, $\mu(A_n) \to \mu(A)$ for any $(A_n) \subseteq \J$ with $A_n \downarrow A$ as $n \to \infty$. 
According to Proposition~2.6 of \cite{Dudley2010}, an additive interval function $\mu$ is upper continuous when and only when $\mu(A_n) \to 0$ for any $(A_n) \subseteq \J$ with $A_n \downarrow \emptyset$, which is called upper continuity at $\emptyset$. For a strict additive transform $\check \mu$ of an interval function $\mu$, this is the case when $\mu$ is upper continuous at $\emptyset$.

A function $f \from J \to \R$ is said to be regulated if it has limits from the left as well as from the right everywhere where applicable, potentially including at $-\infty$ and $\infty$ if $J$ is unbounded. In particular, a regulated function is bounded and has at most a finite number of jumps larger than any fixed $\varepsilon > 0$.
A second important result is the following.
\begin{theorem} \label{theorem:integral}
Consider an interval function $\mu \from \J \to \R$ which is upper continuous at $\emptyset$ and has bounded variation and which has a strict additive transform, $\check \mu$. 
Consider also a regulated function $f \from J \to \R$. Define an interval function $\nu$ by $\nu(A) = f(u) \mu(A)$ when left end point $u$ of $A$ is in $A$ and by $\nu(A) = f(u+) \mu(A)$ when $u$ is not in $A$. 
Then $\nu$ has a strict additive transform, $\check \nu$, which is given by the Kolmogorov integral $\check \nu(A) = \int_A f \d\check \mu$.
\end{theorem}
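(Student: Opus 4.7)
The plan is to verify that $\check\nu(A) := \int_A f \d \check\mu$ serves as the strict additive transform of $\nu$, and I would proceed in three stages. I would first confirm that $\check\nu$ is a well-defined additive interval function. The strict additive transform $\check\mu$ has bounded variation (in fact $\|\check\mu\|_{(1)} \le \|\mu\|_{(1)}$, as noted in the proof of Theorem~\ref{theorem:additive_multiplicative}), is additive by construction, and is upper continuous at $\emptyset$ because $\mu$ is (as mentioned in the paragraph after Theorem~\ref{theorem:additive_multiplicative}). Together with $f$ regulated, standard results on the Kolmogorov integral (for instance from Chapter~2 of \cite{Dudley2010}) then give that $\int_A f \d \check\mu$ exists for every $A \in \J$; additivity of $\check\nu$ over disjoint unions of subintervals follows immediately.

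Next, I would decompose the strict-transform defect term by term as
\begin{equation*}
\nu(B) - \check\nu(B) = L_B\bigl(\mu(B) - \check\mu(B)\bigr) + \Bigl(L_B \check\mu(B) - \int_B f \d \check\mu\Bigr),
\end{equation*}
where $L_B$ denotes the left-value of $f$ used in the definition of $\nu(B)$: $L_B = f(u)$ when the left end-point $u$ of $B$ lies in $B$, and $L_B = f(u+)$ otherwise. Summed over $B \in \partition{B}$, the first term is bounded by $\|f\|_\infty \sum_B |\mu(B) - \check\mu(B)|$, which can be forced below any prescribed $\varepsilon/2$ on sufficiently fine refinements by the defining property of $\check\mu$ as a strict additive transform of $\mu$.

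For the second term I would exploit the regulated nature of $f$. For any $\delta > 0$ only finitely many points $w \in J$ have one of $|f(w)-f(w-)|$, $|f(w+)-f(w)|$, or $|f(w+)-f(w-)|$ exceeding $\delta$, so I would enlarge the initial partition to place each such $w$ as a singleton $\{w\}$, and between these to have each non-singleton interval of $f$-oscillation less than $\delta$ (possible for any regulated $f$ with no big jumps). In any refinement $\partition{B}$, singletons remain singletons, contributing $\int_{\{w\}} f \d\check\mu = f(w)\check\mu(\{w\}) = L_B \check\mu(B)$ exactly; on each non-singleton $B$, a short case analysis on the bracket type of $B$ gives $|f(v) - L_B| \le \delta$ for all $v \in B$, and hence
\begin{equation*}
\Bigl|L_B \check\mu(B) - \int_B f \d \check\mu \Bigr| \le \int_B |f - L_B| \d \|\check\mu\|_{(1)} \le \delta \, \|\check\mu\|_{(1)}(B).
\end{equation*}
Summing yields a bound of $\delta \|\check\mu\|_{(1)} \le \delta \|\mu\|_{(1)}$, which falls below $\varepsilon/2$ for $\delta$ chosen sufficiently small.

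The main obstacle will be justifying the Kolmogorov-integral estimate $|L_B \check\mu(B) - \int_B f \d\check\mu| \le \int_B |f - L_B| \d\|\check\mu\|_{(1)}$ within the interval-function framework, together with the bracket-type case analysis ensuring that the choice $L_B \in \{f(u), f(u+)\}$ genuinely matches $f$ on the whole of $B$ up to oscillation error once the large jumps of $f$ have been isolated as singletons. With these in place, combining the two pieces with $\varepsilon/2$ apiece delivers the required strict additive transform property for $\nu$.
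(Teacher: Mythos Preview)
Your proposal is correct and follows essentially the same approach as the paper: a Young-type partition isolating the large jumps of $f$ as singletons so that the oscillation of $f$ on the remaining open pieces is below a prescribed threshold, combined with the defining property of the strict additive transform $\check\mu$ and the basic Kolmogorov-integral estimate $\bigl|\int_B g\,\d\check\mu\bigr|\le\sup_{s\in B}|g(s)|\,\|\check\mu\|_{(1)}(B)$. The only cosmetic difference is that the paper inserts an auxiliary tag point $s_B\in B$ and uses the three-term telescoping
\[
\nu(B)-\check\nu(B)=(y_B-f(s_B))\mu(B)+f(s_B)(\mu(B)-\check\mu(B))+\int_B(f(s_B)-f)\,\d\check\mu,
\]
whereas you collapse the first and third terms by taking $s_B$ to coincide with the left value $L_B$; the resulting bounds are identical.
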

\begin{proof}
Since $\check \mu$ will be additive, upper continuous and of bounded variation and $f$ is regulated, the Kolmogorov integral $\int_A f \d\check \mu$ exists as a consequence of Theorem~2.20 and Proposition~2.25 of \cite{Dudley2010}. The Kolmogorov integral satisfies $\|\int_A f \d\check \mu\| \leq \sup_{s \in A} |f(s)| \|\check \mu\|_{(1)}(A)$, where ${\| \cdot \|_{(1)}(A)}$ is the variation on $A$.
We will consider a partition $\partition{A}$ of $J$ with elements of the form $\{t_i\}$ and $(t_{i-1},t_i)$.
Such a partition is called a Young partition in \cite{Dudley2010}.
Since $f$ is regulated we can, according to Theorem~2.1 of \cite{Dudley2010}, find such a partition $\partition{A}$ such that the oscillation of $f$ on the interval $(t_{i-1}, t_i)$, $\sup_{s,u \in (t_{i-1},t_i)}\|f(s) - f(u)\|$, does not exceed a given $\varepsilon > 0$ for any $i$. Potentially by a refinement, we can take $\partition{A}$ such that also $\sum_{B \in \partition{B}} \|\mu(B) - \check \mu(B)\| < \varepsilon$ for any refinement $\partition{B}$ of $\partition{A}$ since $\check \mu$ is the strict additive transform of $\mu$.
Now, consider any refinement $\partition{B}$ of $\partition{A}$ and let $s_B$ denote any member of $B \in \partition{B}$ and, if $u$ is the left end point of $B \in \partition{B}$, let $y_B = f(u)$ if $u \in B$ and $y_B = f(u+)$ if $u \notin B$.
We then have the conclusion that
\begin{equation}
\begin{aligned}
\sum_{B \in \partition{B}} \| \nu(B) - \int_B f \d \check \mu \| &\leq \sum_{B \in \partition{B}} \| (y_B - f(s_B)) \mu(B) \| \\
&\phantom{{}=} + \sum_{B \in \partition{B}} \|f(s_B)(\mu(B) - \check \mu(B))\| \\
&\phantom{{}=} + \sum_{B \in \partition{B}} \| \int_{B} (f(s_B) - f) \d \check \mu\| \\
& \leq \varepsilon \|\mu\|_{(1)} + \varepsilon \|f\|_{\infty} + \varepsilon \|\check \mu\|_{(1)},
\end{aligned}
\end{equation} 
which can be made arbitrarily small by an appropriate choice of $\varepsilon$.
\end{proof}
Since $\check \mu$ is upper continuous and of bounded variation under the assumptions of Theorem~\ref{theorem:integral} and since a regulated function is bounded and Borel measurable, the Kolmogorov integral of the theorem also corresponds to the Lebesgue-Stieltjes integral.

The concept of a random interval function on probability space $(\Omega, \F, \P)$ can be introduced as a function $\mu \from \Omega \times \J \to \R$ such that $\omega \mapsto \mu(\omega; A)$ is $\F$-Borel measurable for all $A \in \J$. This concept will be useful in the following.

\section{Statement and proof of main result}
\label{sec:main_result}
Let us consider $J = (0, \tau]$ for some $\tau > 0$ and define the interval functions that are relevant in the mutli-state context.
We will consider $P_{jk}$ an interval function with definitions
\begin{align*}
    P_{jk}((s,t]) &= \P(U(t) = k \given U(s) = j), \\
    P_{jk}((s,t)) &= \P(U(t-) = k \given U(s) = j), \\
    P_{jk}([s,t)) &= \P(U(t-) = k \given U(s-) = j), \\
    P_{jk}([s,t]) &= \P(U(t) = k \given U(s-) = j),
\end{align*}
for $s \leq t$.
Here, we can again take $P_{jk}((s,t]) = P_{jk}((s,t)) = \indic{j=k}$ if $p_j(s) = 0$ and similarly $P_{jk}([s,t]) = P_{jk}([s,t)) = \indic{j=k}$ if $p_j(s-) = 0$.
We have $P_{jk}((s,t]) = P_{jk}(s,t)$ and  $P_{jk}((s,t) ) = P_{jk}(s,t-)$ whenever $p_j(s) > 0$, and $P_{jk}([s,t]) = P_{jk}(s-,t)$ and $P_{jk}([s,t)) = P_{jk}(s-,t-)$ whenever $p_j(s-) > 0$.
Similarly the matrix-valued $P$ can be considered an interval function with the interval function $P_{jk}$ as the $(j,k)$th entry.
Also, $\Lambda_{jk}$ as an interval function is given by $\Lambda_{jk}(A) = \int_A p_j(u-)^{-1} F_{jk}(\d u)$ for an interval $A$. This defines an additive interval function with values in $[0, \infty]$. 
As an interval function, $P$ is a multiplicative interval function when and only when the Chapman--Kolmogorov equation $P(s,t) = P(s,u) P(u,t)$ for $s \leq u \leq t$ holds. In the non-Markov setting we consider, this is not generally the case.

If we consider again the multi-state process $U$, then $M_{jk}$ with $M_{jk}((s,t]) = \indic{U(t) = k, U(s) = j}$, $M_{jk}((s,t)) = 1(X(t-) = k, X(s) = j)$, $M_{jk}([s,t]) = 1(X(t) = k, X(s-) = j)$, and $M_{jk}([s,t)) = 1(X(t-) = k, X(s-) = j)$ 
for $k \neq j$ is a random, upper continuous interval function.
The interval functions defined, for intervals $A$, by $N_{jk}(\omega; A) := \# \{u \in A \given U(\omega; u) = k, U(\omega; u-) = j\}$ for $k \neq j$ will be important. These interval functions are additive and upper continuous.
Since a change in state for $U$ involves at least one direct transition somewhere, we have $| M_{jk}(\omega; A) | \leq \sum_{h \neq j} N_{jh}(\omega; A)$ and so $\| M_{jk}(\omega; \cdot) \|_{(1)}(A) \leq \sum_{h \neq j} N_{jh}(\omega; A)$ for $k \neq j$ due to additivity of $N_{jh}(\omega; \cdot)$. 
The interval function $N_{jk}(\omega; \cdot)$ is also the candidate for the strict additive transform of $M_{jk}(\omega; \cdot)$. 
Since we consider $J = (0, \tau]$ for some $\tau > 0$, we have that for any sequence of partitions $(\partition{A}_n)$ with mesh converging to 0, $\lim_n \sum_{A \in \partition{A}_n} \| M_{jk}(\omega; A) - N_{jk}(\omega; A) \| = 0$ for almost all $\omega$ since $\partition{A}_n$ separates jumps when the mesh is sufficiently small. 
In particular, $N_{jk}(\omega; \cdot)$ will be the strict additive transform of $M_{jk}(\omega; \cdot)$ in this case, and $N_{jk}$ is a random interval function since, for any $A \in \J$, $N_{jk}(A)$ is the limit of $\F$-Borel measurable functions like $\sum_{B \in \partition{B}_n} M_{jk}(B)$ for partitions $\partition{B}_n$ of $A$.
We define interval functions by $Q_{jk}(A) = \E(M_{jk}(A))$ and by $F_{jk}(A) := \E(N_{jk}(A))$. Here, $Q_{jk}$ is upper continuous. As an interval function, $F_{jk}$ is additive and, at least if $F_{jk}(J) < \infty$, also upper continuous.

\begin{prop} \label{prop:F_jk_transform_Q_jk}
For a given $j$, assume $F_{jk}(J) < \infty$ for all $k \neq j$. Then $F_{jk}$ is the strict additive transform of $Q_{jk}$ for all $k \neq j$.
\end{prop}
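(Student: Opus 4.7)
The plan is to lift the almost-sure pointwise result already noted in the excerpt, namely that $N_{jk}(\omega;\cdot)$ is the strict additive transform of $M_{jk}(\omega;\cdot)$, to the level of expectations by a dominated convergence argument, and then convert the resulting mesh-type statement into the refinement-type statement required by the definition of a strict additive transform.

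First I would note that $F_{jk}$ qualifies as a candidate: since $N_{jk}(\omega;\cdot)$ is additive and since $F_{jk}(J)<\infty$ ensures the relevant integrals are finite, linearity of expectation shows $F_{jk}$ is additive. Next, since $M_{jk}(B)$ and $N_{jk}(B)$ are both integrable, for any partition $\partition{B}$ of $J$ Jensen's inequality and Fubini give
\begin{equation*}
\sum_{B\in\partition{B}} | Q_{jk}(B)-F_{jk}(B)|
= \sum_{B\in\partition{B}} |\E(M_{jk}(B)-N_{jk}(B))|
\leq \E\Bigl(\sum_{B\in\partition{B}} |M_{jk}(B)-N_{jk}(B)|\Bigr).
\end{equation*}
So the task reduces to showing that the right-hand side can be made arbitrarily small for all sufficiently fine refinements of a fixed partition.

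The key step is producing an integrable envelope so that dominated convergence applies. Using the bound $|M_{jk}(\omega;B)|\leq \sum_{h\neq j} N_{jh}(\omega;B)$ noted before the proposition, together with additivity of the $N_{jh}(\omega;\cdot)$, for every partition $\partition{B}$ of $J$,
\begin{equation*}
\sum_{B\in\partition{B}} |M_{jk}(\omega;B)-N_{jk}(\omega;B)|
\leq \sum_{B\in\partition{B}} |M_{jk}(\omega;B)| + \sum_{B\in\partition{B}} N_{jk}(\omega;B)
\leq 2\sum_{h\neq j} N_{jh}(\omega;J),
\end{equation*}
and the envelope has finite expectation $2\sum_{h\neq j} F_{jh}(J)<\infty$ by hypothesis. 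Combined with the pointwise convergence $\sum_{B\in\partition{B}_n}|M_{jk}(\omega;B)-N_{jk}(\omega;B)|\to 0$ for almost every $\omega$ along any sequence of partitions $(\partition{B}_n)$ with $|\partition{B}_n|\to 0$ (as explicitly argued just before the proposition), dominated convergence yields $\E\sum_{B\in\partition{B}_n}|M_{jk}(B)-N_{jk}(B)|\to 0$.

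Finally, since this holds along every sequence of partitions with mesh tending to zero, the equivalence noted in Section~3 gives $\lim_{|\partition{A}|\to 0}\sum_{B\in\partition{A}}\|Q_{jk}(B)-F_{jk}(B)\|=0$. To convert this mesh statement into the refinement statement in the definition of a strict additive transform, given $\varepsilon>0$ I would pick $\delta>0$ so that $|\partition{A}|<\delta$ forces the sum to be below $\varepsilon$, and then fix any partition $\partition{A}$ with $|\partition{A}|<\delta$; every refinement $\partition{B}$ of $\partition{A}$ satisfies $|\partition{B}|\leq|\partition{A}|<\delta$, hence $\sum_{B\in\partition{B}}|Q_{jk}(B)-F_{jk}(B)|<\varepsilon$, as required. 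The only non-routine point is verifying the integrable envelope, which is precisely where the assumption $F_{jk}(J)<\infty$ for all $k\neq j$ is consumed.
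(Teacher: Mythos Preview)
Your proof is correct and follows essentially the same route as the paper: bound $\sum_B |Q_{jk}(B)-F_{jk}(B)|$ by $\E\sum_B |M_{jk}(B)-N_{jk}(B)|$, use the integrable envelope $2\sum_{h\neq j}N_{jh}(J)$ to apply dominated convergence along mesh-null sequences, and then pass from the mesh limit to the refinement statement. The only differences are cosmetic---you spell out the additivity of $F_{jk}$ and the mesh-to-refinement conversion more explicitly than the paper does.
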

\begin{proof}
For any given $k \neq j$ and any sequence of partitions $(\partition{A}_n)$ with ${|\partition{A}_n| \to 0}$ as $n \to \infty$,
\begin{equation}
\sum_{A \in \partition{A}_n} | Q_{jk}(A) - F_{jk}(A) | \leq \E \big( \sum_{A \in \partition{A}_n} | M_{jk}(A) - N_{jk}(A)| \big) \to 0
\end{equation}
for $n \to \infty$ by dominated convergence since $\sum_{A \in \partition{A}_n} | M_{jk}(A) - N_{jk}(A)| \leq 2 \sum_{h \neq j} N_{jh}(J)$, which is integrable under the assumption. In particular, $S(\partition{A}) := \sum_{A \in \partition{A}} | Q_{jk}(A) - F_{jk}(A) |$ has limit $\lim_{|\partition{A}| \to 0} S(\partition{A}) = 0$ in mesh and so over refinements, which is the requirement for $F_{jk}$ to be the strict additive transform of $Q_{jk}$.
\end{proof}
As a consequence of Proposition~\ref{prop:F_jk_transform_Q_jk} and the argument found in the proof of Theorem~\ref{theorem:additive_multiplicative}, we obtain $\|Q_{jk}\|_{(1)} \geq F_{jk}(J)$ when $F_{jk}(J) < \infty$, but this conclusion holds generally in the sense that $F_{jk}(J) = \infty$ implies $\|Q_{jk}\|_{(1)} = \infty$ which can be seen as a result of Fatou's lemma.
From the point-wise bound $\| M_{jk}(\omega; \cdot) \|_{(1)}(A) \leq \sum_{h \neq j} N_{jh}(\omega; A)$, we obtain $\|Q_{jk}\|_{(1)} \leq \sum_{h \neq j} F_{jh}(J)$ also.

The main result is given as follows. Recall that we are considering a bounded interval $J= (0, \tau]$. 
\begin{theorem} \label{theorem:Lambda_vs_P}
Assume $P - \one$ is upper continuous at $\emptyset$ and of bounded variation. 
Then $\Lambda$ is the strict additive transform of $P - \one$. 
\end{theorem}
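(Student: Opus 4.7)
The plan is to deduce the theorem from Theorem~\ref{theorem:integral} and Proposition~\ref{prop:F_jk_transform_Q_jk}, applied entrywise, by writing each off-diagonal entry $P_{jk}$ in the $f\cdot\mu$ form of Theorem~\ref{theorem:integral}. First I would reduce to the off-diagonal entries: the conventions for $P$ together with the identity $\sum_{k=1}^d P_{jk}(A) = 1$ (which is trivially true in the degenerate case) give $(P(A)-\one)_{jj} = -\sum_{k\neq j}P_{jk}(A)$, and by definition $\Lambda_{jj}(A) = -\sum_{k\neq j}\Lambda_{jk}(A)$. Linearity of the strict additive transform, an immediate triangle-inequality consequence of the definition, then reduces the matrix statement to showing, for each $k \neq j$, that $\Lambda_{jk}$ is the strict additive transform of $P_{jk}$.

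For fixed $k \neq j$, I would identify $P_{jk}$ as the interval function of the type appearing in Theorem~\ref{theorem:integral}, with $\mu = Q_{jk}$ and $f(u) = 1/p_j(u-)$ (set to $0$ where $p_j(u-) = 0$). The pointwise identity $Q_{jk}(A) = p_j(u-) P_{jk}(A)$ when the left endpoint $u$ lies in $A$ and $Q_{jk}(A) = p_j(u) P_{jk}(A)$ when $u \notin A$ holds by the definitions (with both sides vanishing in the degenerate case); since $U$ is c\`adl\`ag with finite state space, $p_j$ is c\`adl\`ag, so $f(u+) = 1/p_j(u)$ where defined, matching the conventions of Theorem~\ref{theorem:integral}. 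To apply that theorem I would verify that $Q_{jk}$ is upper continuous at $\emptyset$ (from upper continuity of $M_{jk}$ and bounded convergence) and of bounded variation: the bound $|Q_{jk}(A)| \leq P_{jk}(A)$ together with the assumed bounded variation of $P - \one$ yields $\|Q_{jk}\|_{(1)} < \infty$, and $F_{jk}(J) \leq \|Q_{jk}\|_{(1)} < \infty$ by the remark following Proposition~\ref{prop:F_jk_transform_Q_jk}, so that proposition identifies $F_{jk}$ as the strict additive transform of $Q_{jk}$. Theorem~\ref{theorem:integral} then delivers the strict additive transform of $P_{jk}$ as $A \mapsto \int_A f \, \d F_{jk} = \Lambda_{jk}(A)$, which is exactly the claim for the off-diagonal entries.

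The step I expect to be the main obstacle is that Theorem~\ref{theorem:integral} requires $f$ to be regulated, in particular bounded, whereas nothing in the hypotheses of Theorem~\ref{theorem:Lambda_vs_P} forces $p_j(u-)$ to be bounded away from $0$ on $J$. To circumvent this I would localize: for each $\varepsilon > 0$ consider the truncation $f_\varepsilon(u) = f(u)\,\indic{p_j(u-) \geq \varepsilon}$, which is regulated and bounded by $\varepsilon^{-1}$, apply Theorem~\ref{theorem:integral} to the interval function $\nu_\varepsilon(A) = f_\varepsilon(u)\, Q_{jk}(A)$ (with the analogous right-limit convention) to identify its strict additive transform as $\int f_\varepsilon\, \d F_{jk}$, and then pass to the limit $\varepsilon \downarrow 0$, using the bounded-variation assumption on $P - \one$ to control both the difference $P_{jk} - \nu_\varepsilon$ and the residual $\int f \, \d F_{jk} - \int f_\varepsilon \, \d F_{jk}$ over $\{p_j(\cdot-) < \varepsilon\}$. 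Making this limiting step precise is where the genuine technical work of the proof is concentrated; the rest is essentially a bookkeeping application of the tools already developed in Section~3.
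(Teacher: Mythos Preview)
Your reduction to off-diagonal entries, the identification $P_{jk}(A) = f(u_A)\,Q_{jk}(A)$ with $f(u)=p_j(u-)^{-1}$, the verification that $Q_{jk}$ has bounded variation with strict additive transform $F_{jk}$, and the diagnosis that the only obstruction is the possible unboundedness of $f$ are all correct and match the paper's starting point. The paper, however, handles the unboundedness quite differently from your proposed truncation. It splits $J$ into $J_{j+}=\{s:p_j(s)>0,\ p_j(s-)>0\}$ and its complement $J_{j0}$, and then uses two structural lemmas from the appendix: Lemma~\ref{lemma:pos_set} (on any closed subinterval of $J_{j+}$, $p_j$ is bounded below, so $f$ is regulated there and Theorem~\ref{theorem:integral} applies directly) and Lemma~\ref{lemma:Lambda_extremes} (at a right boundary point $t$ of a component of $J_{j+}$, either $\sum_{k\neq j}\Lambda_{jk}$ blows up, which is excluded by $\Lambda_{jk}\le\|P_{jk}\|_{(1)}<\infty$, or $\sum_{k\neq j}\Lambda_{jk}(\{t\})=1$, which can happen only finitely often). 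Together these force $J_{j+}$ to be a \emph{finite} union of open intervals, with $p_j(u_i-)>0$ at each boundary point $u_i$. The proof then builds an explicit Young-type partition of $J$ and pieces together the strict-transform statement from the regulated case on each piece, using upper continuity of $\|P_{jk}\|_{(1)}$ to absorb small neighborhoods of the $u_i$.

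Your truncation route is not unreasonable, but the step you flag as ``the genuine technical work'' is more than bookkeeping and, as sketched, has a real gap. Splitting
\[
\sum_{B\in\partition{B}}|P_{jk}(B)-\Lambda_{jk}(B)|
\le \sum_{B}|P_{jk}(B)-\nu_\varepsilon(B)|
+ \sum_{B}|\nu_\varepsilon(B)-\textstyle\int_B f_\varepsilon\,\d F_{jk}|
+ \sum_{B}|\textstyle\int_B f_\varepsilon\,\d F_{jk}-\Lambda_{jk}(B)|,
\]
the third sum equals $\Lambda_{jk}(J)-\int_J f_\varepsilon\,\d F_{jk}$ for every partition (both summands are additive and the difference is nonnegative), and this does tend to $0$ once you know $\Lambda_{jk}(J)<\infty$, which you can indeed extract from $\|P_{jk}\|_{(1)}<\infty$ via $\int f_\varepsilon\,\d F_{jk}\le\|P_{jk}\|_{(1)}$ and monotone convergence. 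The second sum is handled by Theorem~\ref{theorem:integral}. The problem is the first sum: since $P_{jk}(B)-\nu_\varepsilon(B)=P_{jk}(B)\,\indic{p_j(u_B^{\pm})<\varepsilon}$, you need
\[
\sup_{\partition{B}\text{ refines }\partition{A}}\ \sum_{B:\,p_j(u_B^{\pm})<\varepsilon} P_{jk}(B)
\]
to be small for some choice of $\partition{A}$, uniformly as $\varepsilon\downarrow 0$. Bounded variation of $P_{jk}$ alone does not give this: a single interval $B$ with left endpoint near a point where $p_j(\cdot-)\to 0$ can have $P_{jk}(B)$ bounded away from $0$, and nothing in your sketch rules out $p_j(\cdot-)$ approaching $0$ continuously (so that for every $\varepsilon$ there are such intervals). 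What actually excludes this scenario is precisely the content of Lemma~\ref{lemma:Lambda_extremes} combined with $\Lambda_{jk}(J)<\infty$: under the hypotheses of the theorem, $p_j(\cdot-)$ never drifts to $0$ from within $J_{j+}$, so the ``bad set'' $\{p_j<\varepsilon\}$ collapses to finitely many intervals whose union has controllable $\|P_{jk}\|_{(1)}$-mass. In other words, to push the truncation limit through you would end up reproving the paper's structural lemmas; the truncation does not sidestep them.
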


\begin{proof}
The assumption implies that $\|P_{jk}\|_{(1)} < \infty$ for all $j$ and $k \neq j$. 
We consider now such a $j$ and $k \neq j$. 
Since $P_{jk}((s,t]) = Q_{jk}((s,t])/p_j(s) \geq Q_{jk}((s,t])$ when $p_j(s) > 0$ and $P_{jk}((s,t]) = 0 = Q_{jk}((s,t])$ otherwise and similarly for other types of intervals, we have $\infty > \|P_{jk}\|_{(1)} \geq \|Q_{jk}\|_{(1)} \geq F_{jk}(J)$. 
Split $J$ into $J_{j+}$ and $J_{j0}$ where $J_{j+} = \{s \in J : p_j(s) > 0 \textup{ and } p_j(s-) > 0 \}$ and $J_{j0} = \{s \in J : p_j(s) = 0 \textup{ or } p_j(s-) = 0 \}$ which are open and closed respectively relative to $J$.
For any interval of the type $[t_0,t_1] \subseteq J_{j+}$ an $\varepsilon > 0$ exists such that $p_j(u-) \geq \varepsilon$ and $p_j(u) \geq \varepsilon$ for all $u \in [t_0,t_1]$ by Lemma~\ref{lemma:pos_set} of the appendix. As a function on $[t_0, t_1]$, $s \mapsto p_j(s-)^{-1}$ is then regulated. With $\mu = Q_{jk}$ and $f(s) = p_j(s-)^{-1}$, Theorem~\ref{theorem:integral} now implies that $\Lambda_{jk}$ is the strict additive transform of $P_{jk}$ on $[t_0, t_1]$.
It is worth noting about $\Lambda_{jk}$ that additivity and non-negativity means that $\|\Lambda_{jk}\|_{(1)}(A) = \Lambda_{jk}(A)$ for any interval $A \in \J$. Also that $\Lambda_{jk}((s,t)) = 0 = P_{jk}((s,t))$ for $(s,t) \subseteq J_{j0}$, $\Lambda_{jk}([t,t]) = \P(X(t) = k \given X(t-) = j) = P_{jk}([t,t])$ generally, and $\Lambda_{jk}((s,t)) = \sup_{[u,v] \subseteq (s,t)} \Lambda_{jk}([u,v]) \leq \sup_{[u,v] \subseteq (s,t)} \|P_{jk}\|_{(1)}([u,v]) \leq \|P_{jk}\|_{(1)}((s,t))$ for $(s,t) \subseteq J_{j+}$ since $\Lambda_{jk}$ is the strict additive transform of $P_{jk}$ on any $[u,v] \subseteq (s,t)$ in this case and since $\Lambda_{jk}$ is upper continuous by definition.

Next, an interval partition of $J$ is considered. 
As an open set relative to $J$, $J_{j+}$ is the countable union of open intervals, open relative to $J$. 
According to Lemma~\ref{lemma:Lambda_extremes} of the appendix, if $[t_0, t_1) \subseteq J_{j+}$ and $t_1 \in J_{j0}$ then either $\Lambda_{jk}([t_0,t_1)) = \infty$ for some $k \neq j$ if $p_j(t_1-) = 0$ or $\sum_{k \neq j} \Lambda_{jk}([t_1,t_1]) = 1$ if $p_j(t_1-) > 0$. 
The first case cannot be encountered and the second case can only be encountered a finite number of times on $J$ since $\Lambda_{jk}$ is dominated by $\|P_{jk}\|_{(1)} < \infty$ for all $k \neq j$ on these types of intervals. This means that $J_{j+}$ is actually a union of finitely many open intervals and this implies the existence of a partition $J=U_1 \cup \{u_1\} \cup U_2 \cup \dots \cup U_m \cup \{u_m\}$ with open intervals $U_i \subseteq J_{j0}$ or $U_i \subseteq J_{j+}$ and with $u_m = \tau$.
Additionally, we necessarily have $p_j(u_i-) > 0$ for all $i$. The existence of such a partition also means that $\Lambda_{jk}(J) \leq \|P_{jk}\|_{(1)} < \infty$ can easily be established from the results above.

Following the proof of Proposition~3.50 of \cite{Dudley2010}, it can be proven that upper continuity of $P_{jk}$ and the assumption $\|P_{jk}\|_{(1)} < \infty$ lead to $A \mapsto \|P_{jk}\|_{(1)}(A)$ being upper continuous at $\emptyset$.
This means that for any $\varepsilon > 0$, we can find a $\delta > 0$ such that $\|P_{jk}\|_{(1)}(A) < \frac{\varepsilon}{4m}$ for all intervals $A$ among $(0, \delta)$ and $(u_i, u_i + \delta)$ for $i=1, \dots, m-1$. In particular, $\sum_{B \in \partition{B}} | P_{jk}(B) - \Lambda_{jk}(B)| < \frac{\varepsilon}{2m}$ for any partition $\partition{B}$ of such an interval $A$. 
Consider $[u_i+\delta, u_{i+1}) \subseteq U_{i+1} \subseteq J_{j+}$. Since $p_{j}(u_{i+1}-) > 0$, the argument of Lemma~\ref{lemma:pos_set} of the appendix leads to the existence of an $\tilde \varepsilon > 0$ such that $p_{j}(v) \geq \tilde \varepsilon$ for all $v \in [u_i + \delta, u_{i+1})$. Then, as seen above, $\Lambda_{jk}$ is the strict additive transform of $P_{jk}$ on $[u_i+\delta, u_{i+1})$. This is also trivially the case when $[u_i+\delta, u_{i+1}) \subseteq U_{i+1} \subseteq J_{j0}$ since both $P_{jk}$ and $\Lambda_{jk}$ are 0 on the open $U_{i+1}$.
So, for each $i$, find a partition $\partition{A}_i$ of $[u_i+\delta, u_{i+1})$ such that $\sum_{B \in \partition{B}} \| P_{jk}(B) - \Lambda_{jk}(B) \| < \frac{\varepsilon}{2m}$ for any refinement $\partition{B}$ of $\partition{A}_i$. 
Put together, this yields a partition $\partition{A} = \{(0, \delta)\} \cup \partition{A}_1 \cup \{ [u_1, u_1] \} \cup \{(u_1, u_1 + \delta)\} \cup \dots \cup \partition{A}_m \cup \{ [\tau, \tau]\}$ such that for any refinement $\partition{B}$ of $\partition{A}$, $\sum_{B \in \partition{B}} | P_{jk}(B) - \Lambda_{jk}(B) | < \varepsilon$. We can conclude that $\Lambda_{jk}$ is the strict additive transform of $P_{jk}$ on $J$. Since $j$ and $k \neq j$ are arbitrary this also establishes that $\Lambda$ is the strict additive transform of $P - \one$ since $\Lambda_{jj}(A) = - \sum_{k \neq j} \Lambda_{jk}$ and $P_{jj}(A) - 1 = - \sum_{k \neq j} P_{jk}(A)$.
\end{proof}
In fact, only a right continuity property rather than an upper continuity property at $\emptyset$ is used for $P - \one$ in the proof above.

The importance of Theorem~\ref{theorem:Lambda_vs_P} comes from Theorem~\ref{theorem:additive_multiplicative} which then states that the strict multiplicative transform of $P$ exists and corresponds to the strict multiplicative transform of $\one + \Lambda$. This means that the multiplicative transform $\prodi \d P$ of $P$ equals the multiplicative transform of $\one + \Lambda$, or in other terms, for any $t > 0$,
\begin{equation}
\Prodi_0^t(\one + \Lambda(\d s)) = \lim \prod_{i=1}^m P(t_{i-1}, t_i)
\end{equation}
where the limit is over refinements of $(0,t]$ which was the desired result.

As argued in the proof of Theorem~\ref{theorem:Lambda_vs_P}, the assumption $\|P - \one\|_{(1)} < \infty$ implies the more standard assumption in multi-state settings, namely that $\Lambda_{jk}(J) < \infty$ for all $j$ and $k \neq j$.
In the Markov case, $\|P - \one\|_{(1)} < \infty$ is, however, implied by $\Lambda_{jk}(J) < \infty$ for all $j$ and $k \neq j$.
The convention that $P_{jk}((s,t]) = 0$ for all $t$ when $p_j(s) = 0$ would have to be abandoned for something that agrees with multiplicativity of $P$. The convention that $P_{jk}(A) = \indic{j=k}$ for $A$ with $p_j(s) = 0$ for all $s \in A$ suffices. 

\begin{prop}
In the Markov case, assume $P - \one$ is upper continuous at~$\emptyset$ and that $\|\Lambda\|_{(1)} < \infty$. Then $P(A) = \prodi_A \d (\one + \Lambda)$ for all $A \in \J$ and in particular
\begin{equation}
    P(s,t) = P((s,t]) = \Prodi_{(s,t]} \d (\one + \Lambda) = \Prodi_s^t(\one + \Lambda(\d u))
\end{equation}
for all $(s,t] \in \J$, and $\|P-\one\|_{(1)} < \infty$.
\end{prop}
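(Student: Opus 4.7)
The plan is to combine three ingredients: a bounded-variation estimate for $P - \one$ derived from the Markov property, Theorem~\ref{theorem:Lambda_vs_P} to identify $\Lambda$ as the strict additive transform of $P - \one$, and Theorem~\ref{theorem:additive_multiplicative} together with the multiplicativity of $P$ furnished by Chapman--Kolmogorov to identify $P$ with $\prodi \d(\one + \Lambda)$.

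First I would establish $\|P - \one\|_{(1)} < \infty$. For $k \neq j$, since the event $\{U(t) = k\}$ conditional on $U(s) = j$ requires at least one transition into $k$ during $(s,t]$, the Markov representation of the compensator of the transition counting process together with the definition $\Lambda_{hk}(\d u) = p_h(u-)^{-1} F_{hk}(\d u)$ gives
\begin{equation*}
P_{jk}(s,t) \leq \sum_{h \neq k} \int_s^t P_{jh}(s, u-)\, \Lambda_{hk}(\d u) \leq \sum_{h \neq k} \Lambda_{hk}((s,t]),
\end{equation*}
using $P_{jh} \leq 1$ in the last step. Together with $|P_{jj}(B) - 1| = \sum_{k \neq j} P_{jk}(B)$, summing entry by entry over any partition $\partition{A}$ of $J$ and collapsing $\sum_{B \in \partition{A}} \Lambda_{hk}(B) = \Lambda_{hk}(J)$ by additivity of $\Lambda_{hk}$ yields $\|P - \one\|_{(1)} \leq C \|\Lambda\|_{(1)} < \infty$ for a dimension-dependent constant $C$. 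The main technical work lies in rigorously justifying the displayed inequality; in particular, the Markov property is needed to identify the intensity of the post-$s$ process conditioned on $U(s) = j$ with $\Lambda$, and the stated convention is what renders the integrals well-defined on sub-intervals where $p_h(u-)$ may vanish.

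With $\|P - \one\|_{(1)} < \infty$ established, the hypothesis of Theorem~\ref{theorem:Lambda_vs_P} is met, so that theorem provides $\Lambda$ as the strict additive transform of $P - \one$. Theorem~\ref{theorem:additive_multiplicative} then produces a strict multiplicative transform of $P = \one + (P - \one)$ and identifies it with the strict multiplicative transform of $\one + \Lambda$, namely $\prodi_A \d(\one + \Lambda)$. In the Markov case, Chapman--Kolmogorov combined with the stated convention makes $P$ itself a multiplicative interval function on $\J$, so $P$ is trivially its own multiplicative transform. Uniqueness of the multiplicative transform then forces $P(A) = \prodi_A \d(\one + \Lambda)$ for every $A \in \J$; specialising to $A = (s,t]$ gives the displayed product-integral formula, and the bounded-variation conclusion is a byproduct of the first step.
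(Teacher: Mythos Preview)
Your approach is sound and reaches the conclusion, but by a genuinely different route from the paper. The paper does not first bound $\|P-\one\|_{(1)}$; instead it revisits the local portion of the proof of Theorem~\ref{theorem:Lambda_vs_P}, observing that on closed subintervals of $J_{j+}$ the hypothesis $\|Q_{jk}\|_{(1)}<\infty$ needed for Theorem~\ref{theorem:integral} already follows from $\|\Lambda\|_{(1)}<\infty$ via $\|Q_{jk}\|_{(1)}\leq\sum_{h\neq j}F_{jh}(J)\leq\sum_{h\neq j}\Lambda_{jh}(J)$. This yields $P(A)=\prodi_A\d(\one+\Lambda)$ locally, and the identity is then propagated to all of $\J$ by multiplicativity together with upper continuity at $\emptyset$, with the variation bound $\|P-\one\|_{(1)}\leq\exp(\|\Lambda\|_{(1)})\|\Lambda\|_{(1)}$ emerging only at the end from the estimate in the proof of Theorem~\ref{theorem:additive_multiplicative}. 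Your route is more modular: once the Markov-specific inequality $P_{jk}(A)\leq\sum_{h\neq k}\Lambda_{hk}(A)$ is secured, Theorems~\ref{theorem:Lambda_vs_P} and~\ref{theorem:additive_multiplicative} apply as black boxes and even deliver the sharper linear bound $\|P-\one\|_{(1)}\leq C\|\Lambda\|_{(1)}$. The cost is that you must justify the conditional-compensator identity $\E[N_{hk}((s,t])\mid U(s)=j]=\int_s^t P_{jh}(s,u-)\,\Lambda_{hk}(\d u)$, which is where the Markov property enters beyond Chapman--Kolmogorov; the paper's argument avoids this extra computation by leaning entirely on the interval-function machinery already developed.
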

\begin{proof}
Since $\Lambda$ is additive and of bounded variation according to the assumption, $\one + \Lambda$ has a strict multiplicative transform, here denoted by $A \mapsto \prodi_A \d (\one + \Lambda)$, by Theorem~\ref{theorem:additive_multiplicative}.
As in the proof of Theorem~\ref{theorem:Lambda_vs_P}, the conclusion that $\Lambda_{jk}$ is the strict additive transform of $P_{jk}$ on $[t_0, t_1) \subseteq J_{j+} := \{s \in J : p_j(s) > 0 \textup{ and } p_j(s-) > 0 \}$ remains valid under the assumption of $\Lambda_{jk}(J) < \infty$ for all $j$ and $k \neq j$ since this is enough to ensure $\|Q_{jk}\|_{(1)} < \infty$ by the inequalities $\|Q_{jk}\|_{(1)} \leq \sum_{h \neq j} F_{jh}(J) \leq \sum_{h \neq j} \Lambda_{jh}(J)$. And this continues to lead to $\Lambda$ being the strict additive transform of $P - \one$ on intervals $[t_0, t_1)$ that are either in $J_{j+}$ or in the interior of $J_{j0} := \{s \in J : p_j(s) = 0 \textup{ or } p_j(s-) = 0 \}$ for all $j$. By Theorem~\ref{theorem:additive_multiplicative}, $P(A) = \prodi_A \d(\one + \Lambda)$ for subintervals of such $[t_0,t_1)$ since $P$ is multiplicative and therefore its own strict multiplicative transform on such $[t_0,t_1)$.
A partition as in the proof of Theorem~\ref{theorem:Lambda_vs_P} can be made such that $(0,\tau] = U_1 \cup \{u_1\} \cup \dots \cup \{u_{m-1}\} \cup U_m \cup \{u_m\}$ with open intervals $U_i$ for which either $U_i \subseteq J_{j+}$ or $U_i \subseteq J_{j0}$ for all $j$. 
Upper continuity at $\emptyset$ of $P-\one$ and multiplicativity of $P$ reveal that $P((s,t)) = \lim_{u \downarrow s}P((s,u)) P([u,t)) = \lim_{u \downarrow s} P([u,t))$ such that 
\begin{equation}
P((s,t)) = \lim_{u \downarrow s} P([u,t)) = \lim_{u \downarrow s} \Prodi_{[u,t)} \d (\one +  \Lambda) = \Prodi_{(s,t)} \d(\one + \Lambda)
\end{equation}
for any $(s,t) \subseteq U_i$ for some $i$ where upper continuity of the multiplicative transform is also used. 
This reveals $P(A) = \prodi_A \d (\one + \Lambda)$ for any subinterval $A \subseteq U_i$ for some $i$. Trivially, $P([u_i, u_i]) = \one + \Lambda([u_i,u_i]) = \prodi_{[u_i,u_i]} \d(\one + \Lambda)$.
Multiplicativity now allows us to glue together any interval such that $P(A) = \prodi_A \d (\one + \Lambda)$ for all $A \in \J$ and thus $P$ is the strict additive transform of $\Lambda$. The argument at the end of the proof of Theorem~\ref{theorem:additive_multiplicative} reveals that $\|P- \one\|_{(1)} \leq \exp(\|\Lambda\|_{(1)}) \|\Lambda\|_{(1)} < \infty$ in this case.
\end{proof}
Again, what is really used in the proof above is a right continuity property rather than an upper continuity property of $P - \one$ at $\emptyset$. This makes the proposition very similar to Theorem~15 of \cite{Gill1990}. The contribution of the proposition is, as mentioned, that $\|\Lambda\|_{(1)} < \infty$ implies $\|P - \one\|_{(1)}$ in this setting. 

\section*{Acknowledgements}
The author would like to thank Erik Thorlund Parner and Jan Pedersen for discussions and comments on drafts of this paper. This research is supported by the Novo Nordisk Foundation, grant NNF17OC0028276.

\appendix

\section{Useful technical results}
Consider the multi-state setting of sections~\ref{sec:multi_state} and~\ref{sec:main_result} and let $J_{j+} = \{s \in (0,\infty) \given p_j(s) > 0 \textup{ and } p_j(s-) > 0\}$ for each $j \in \{1, \dots, d\}$. 

\begin{lemma} \label{lemma:pos_set}
Consider $j \in \{1, \dots, d\}$. For any closed interval $[s,t] \subseteq J_{j+}$ an $\varepsilon > 0$ exists such that $p_j(u) \geq \varepsilon$ and $p_j(u-) \geq \varepsilon$ for all $u \in [s,t]$.
\end{lemma}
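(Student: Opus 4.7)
The state occupation probability $p_j$ inherits càdlàg regularity from $U$, so $p_j$ is right-continuous at every point of $[0,\infty)$ and has a left limit $p_j(u-)$ everywhere. My plan is a standard pointwise-to-uniform compactness argument: produce a positive lower bound in a neighborhood of each point of $[s,t]$, then extract a finite subcover.

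First I would localize at an arbitrary $u \in [s,t]$. By assumption $[s,t]\subseteq J_{j+}$, so both $p_j(u)$ and $p_j(u-)$ are strictly positive. Right-continuity of $p_j$ at $u$ produces a $\delta_u>0$ such that $p_j(v) > p_j(u)/2$ for all $v\in [u, u+\delta_u)$, and the definition of the left limit produces a $\delta_u'>0$ such that $p_j(v) > p_j(u-)/2$ for all $v\in(u-\delta_u',u)$. Setting $c_u := \tfrac{1}{2}\min(p_j(u),p_j(u-)) > 0$ and $V_u := (u-\delta_u',u+\delta_u)$, I obtain $p_j(v)\geq c_u$ for every $v\in V_u$.

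The next step is to transfer this bound to the left limit $p_j(v-)$ on $V_u$. For $v\in V_u$ with $v > u$, pick $w\in (u,v)\subseteq V_u$ arbitrarily close to $v$ from below; since $p_j(w)\geq c_u$, letting $w\uparrow v$ gives $p_j(v-)\geq c_u$. For $v\in V_u$ with $v<u$, the same argument with $w\in (v-\delta_u'/2,v)\subseteq(u-\delta_u',u)$ yields $p_j(v-)\geq c_u$. At $v=u$ the bound $p_j(u-)\geq c_u$ holds by the choice of $c_u$. So both $p_j$ and $p_j(\cdot-)$ are bounded below by $c_u$ on all of $V_u$.

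Finally, $\{V_u\}_{u\in[s,t]}$ is an open cover of the compact set $[s,t]$, so by Heine--Borel there is a finite subcover $V_{u_1},\dots,V_{u_N}$. Taking $\varepsilon := \min_{1\leq i\leq N} c_{u_i} > 0$ then gives $p_j(u)\geq\varepsilon$ and $p_j(u-)\geq\varepsilon$ for every $u\in[s,t]$, as required. The only place that requires genuine care is the verification of the lower bound for $p_j(v-)$ when $v$ lies on the opposite side of $u$ inside $V_u$; this is handled by restricting the approximating sequence $w\uparrow v$ to the appropriate half-interval of $V_u$ that avoids $u$, so that the pointwise bound survives the passage to the limit.
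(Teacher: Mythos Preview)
Your argument is correct apart from one small slip: in the case $v \in V_u$ with $v < u$, the interval $(v-\delta_u'/2, v)$ need not lie inside $(u-\delta_u',u)$ when $v$ is close to the left edge of $V_u$ (take $v = u - 0.9\,\delta_u'$, for instance). The fix is immediate---use $w \in (u-\delta_u',v)$ instead, which is non-empty and contained in $V_u$---so the structure of the proof is unaffected.

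Your route and the paper's are both compactness arguments but of different flavors. The paper argues by contradiction via sequential compactness: assuming no uniform lower bound exists, it selects $u_n \in [s,t]$ with $p_j(u_n) < n^{-1}$ or $p_j(u_n-) < n^{-1}$, extracts a monotone convergent subsequence by Bolzano--Weierstrass, and shows by a short case analysis that the limit point $u$ satisfies $p_j(u)=0$ or $p_j(u-)=0$, contradicting $[s,t]\subseteq J_{j+}$. You give the direct open-cover version: build a neighborhood with a positive lower bound around each point and invoke Heine--Borel for a finite subcover. The paper's approach sidesteps the need to verify the left-limit bound on the entire neighborhood (precisely the step you flagged as requiring care), while your approach is direct and yields the $\varepsilon$ more explicitly as a finite minimum. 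Both are standard and equally valid here.
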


\begin{proof}
Assume not for an interval $[s,t]$. Then we can find a sequence $(u_n)_{n \in \N} \subseteq [s,t]$ such that $p_j(u_n) < n^{-1}$ or $p_j(u_n-) < n^{-1}$. Since $[s,t]$ is bounded and closed, a monotone and thus convergent subsequence $(u_{n_k})_{k \in \N}$ exists according to the Bolzano--Weierstrass Theorem with a limit $u = \lim_{k \to \infty} u_{n_k} \in [s,t]$.
If $u_{n_k} = u$ from a certain point then $p_j(u-) = p_j(u_{n_k}-) < n_k^{-1}$ or $p_j(u) = p_j(u_{n_k}) < n_k^{-1}$ for any large $k$. This implies $p_j(u-) = 0$ or $p_j(u) = 0$. If $u_{n_k} \neq u$ for all $k$ for the monotone, convergent sequence $(u_{n_k})$ then $\lim_{k \to \infty}p_j(u_{n_k}-) = \lim_{k \to \infty}p_j(u_{n_k})$ since $p_j$ has limits from either direction and this limit is 0 due to the properties of the sequence. If $u_{n_k}$ is increasing towards $u$ then $p_j(u-) = \lim_{k \to \infty} p_j(u_{n_k}) = 0$. Similarly, if $u_{n_k}$ is decreasing towards $u$ then $p_j(u) = \lim_{k \to \infty} p_j(u_{n_k}) = 0$.
The conclusion is that we either have $p_j(u) = 0$ or $p_j(u-) = 0$ for $u \in [s,t]$ such that $[s,t]$ cannot be a subset of $J_{j+}$. This proves the lemma by contraposition.
\end{proof}

\begin{lemma} \label{lemma:pj_inequality}
For any $j \in \{1, \dots, d\}$,
\begin{equation}
    p_j(t) \geq p_j(s) \Prodi_s^t(1-\Lambda_{j\bullet}(\d u))
\end{equation}
for $t \geq s$ where $\Lambda_{j\bullet} = \sum_{k \neq j} \Lambda_{jk}$.
\end{lemma}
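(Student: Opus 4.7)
The plan is to exploit the exact balance equation for state occupation and then solve the resulting scalar linear Volterra equation by Duhamel's formula. For any $v \in [s, t]$, book-keeping the expected number of transitions into and out of state $j$ over $(s, v]$ path by path gives
\[ p_j(v) - p_j(s) = \sum_{k \neq j} F_{kj}((s, v]) - \sum_{k \neq j} F_{jk}((s, v]) = \sum_{k \neq j} \int_{(s, v]} p_k(u-) \d\Lambda_{kj}(u) - \int_{(s, v]} p_j(u-) \d\Lambda_{j\bullet}(u), \]
where the second equality uses $F_{jk}(\d u) = p_j(u-) \Lambda_{jk}(\d u)$ (which is also consistent where $p_j(u-) = 0$, since $F_{jk}$ places no mass there). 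Rearranged, $p_j$ satisfies the scalar Volterra equation
\[ p_j(v) + \int_{(s, v]} p_j(u-) \d\Lambda_{j\bullet}(u) = p_j(s) + E(v), \qquad E(v) := \sum_{k \neq j} \int_{(s, v]} p_k(u-) \d\Lambda_{kj}(u), \]
with $E$ non-decreasing and $E(s) = 0$.

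Let $R(r, v) := \Prodi_r^v(1 - \Lambda_{j\bullet}(\d u))$ denote the scalar product integral. Since $\Delta \Lambda_{j\bullet}(u) = \Delta F_{j\bullet}(u)/p_j(u-) \leq 1$ (because $\Delta F_{j\bullet}(u) = \P(U(u-) = j, U(u) \neq j) \leq p_j(u-)$), each factor lies in $[0, 1]$, so $R(r, v) \in [0, 1]$ and $R(r, v) = 1 - \int_{(r, v]} R(r, u-) \d\Lambda_{j\bullet}(u)$. Substituting $p_j(v) = R(s, v) z(v)$ into the Volterra equation and applying Stieltjes integration by parts to the product $R(s, v) z(v)$ reduces the equation to $R(s, v) \d z(v) = \d E(v)$ with $z(s) = p_j(s)$. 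Solving and using the multiplicativity $R(s, v) = R(s, u) R(u, v)$, one arrives at Duhamel's formula
\[ p_j(v) = p_j(s) R(s, v) + \int_{(s, v]} R(u, v) \d E(u). \]

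Since $R(u, v) \geq 0$ and $\d E \geq 0$, the integral term on the right is non-negative, and therefore $p_j(v) \geq p_j(s) R(s, v) = p_j(s) \Prodi_s^v(1 - \Lambda_{j\bullet}(\d u))$, which at $v = t$ is the lemma. The main obstacle is the careful handling of Duhamel's formula when $\Lambda_{j\bullet}$ has jumps, where one must distinguish $R(s, v)$ from $R(s, v-)$ in the integration-by-parts step and keep track of the atomic contributions; one should also confirm that $E(v)$ is finite (ensured by $F_{kj}((s, t]) < \infty$ for each $k \neq j$) so that the Volterra equation is well-posed as written. In the degenerate case $\Lambda_{j\bullet}((s, t]) = \infty$ the product integral is naturally interpreted as $0$ and the claim reduces to the trivial bound $p_j(t) \geq 0$.
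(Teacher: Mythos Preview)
Your approach is sound and takes a genuinely different route from the paper. The paper does not write the Volterra equation in full and never invokes Duhamel's formula. Instead it uses the two–point form of the balance identity, $p_j(u) = p_j(s) + \sum_{k\neq j} Q_{kj}((s,u]) - \sum_{k\neq j} Q_{jk}((s,u])$, drops the non-negative entry term at once to get the one–step bound $p_j(u)\geq p_j(s)\bigl(1-P_{j\bullet}((s,u])\bigr)$, iterates this over a partition to obtain $p_j(t)\geq p_j(s)\prod_i\bigl(1-P_{j\bullet}((t_{i-1},t_i])\bigr)$, and then passes to the refinement limit using the paper's own interval-function machinery (Theorems~\ref{theorem:additive_multiplicative} and~\ref{theorem:integral}) to identify the limit with $\prodi_s^t(1-\Lambda_{j\bullet}(\d u))$; a separate reduction via $\tilde\tau=\sup\{u:(s,u]\subseteq J_{j+}\}$ disposes of the cases where the product integral vanishes. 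Your route via the exact Volterra equation and variation of constants is more direct and independent of that machinery, whereas the paper's argument showcases it and avoids any Stieltjes integration by parts with jumps. Two points in your sketch should be tightened. First, the substitution $p_j=R(s,\cdot)\,z$ is only heuristic and breaks down whenever $R(s,\cdot)$ hits zero (which can happen at an atom with $\Delta\Lambda_{j\bullet}=1$ while $p_j$ remains positive afterwards through re-entries); instead verify the Duhamel identity $p_j(v)=p_j(s)R(s,v)+\int_{(s,v]}R(u,v)\,\d E(u)$ directly and appeal to uniqueness for the linear Stieltjes equation, which is standard once $\Lambda_{j\bullet}((s,t])<\infty$. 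Second, finiteness of $E$ is not an extra hypothesis: from the pathwise bound $\bigl|\sum_{k\neq j}N_{kj}-\sum_{k\neq j}N_{jk}\bigr|\leq 1$ one gets $E(v)\leq \sum_{k\neq j}F_{jk}((s,v])+1\leq \Lambda_{j\bullet}((s,v])+1<\infty$, so the Volterra equation is well posed in the non-degenerate case.
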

\begin{proof}
The claim is trivial if $p_j(s) = 0$, so assume $p_j(s) > 0$. 
The right-continuity of $p_j$ now implies the existence of an interval $(s,u] \subseteq J_{j+}$. Let $\tilde \tau = \sup \{u \in (0,\infty) : (s,u] \subseteq J_{j+}\}$, then $\tilde \tau \notin J_{j+}$ since $J_{j+}$ and the set that the supremum is taken over are open. Suppose $\tilde \tau < \infty$, then $p_j(\tilde \tau-) = 0$ or $p_j(\tilde \tau-) > 0$ and $p_j(\tilde \tau) = 0$. In the latter case, $\Lambda_{j\bullet}([\tilde \tau, \tilde \tau]) = p_j(\tilde \tau-)^{-1} \sum_{k \neq j}(F_{jk}(\tilde \tau) - F_{jk}(\tilde \tau-)) = 1$ and so $\prodi_s^{\tilde \tau}(1 - \Lambda_{j\bullet}(\d u)) = 0$ in which case the inequality is trivial for $t \geq \tilde \tau$ by multiplicativity of the product integral. In the former case, if we can prove that the inequality holds for all $t < \tilde \tau$, then the inequality continues to hold in the limit from the left, $0 = p_j(\tilde \tau-) \geq p(s) \prodi_s^{\tilde \tau-}(1 - \Lambda_{j\bullet}(\d u))$ implying $\prodi_s^{\tilde \tau-}(1 - \Lambda_{j\bullet}(\d u)) = 0$ since $p_j(s) > 0$ by assumption. This in turn implies $\prodi_s^{t}(1 - \Lambda_{j\bullet}(\d u)) = 0$ for $t \geq \tilde \tau$ by multiplicativity of the product integral in which case the inequality is trivial. So we only need to consider $t < \tilde \tau$, in particular with $(s,t] \subseteq J_{j+}$, and we may also assume $\prodi_s^t(1- \Lambda_{j\bullet}(\d u)) > 0$ since the inequality is trivial otherwise. Under this last assumption we have $\Lambda_{j\bullet}((s,t]) < \infty$ and so $F_{jk}((s,t]) < \infty$ for all $k \neq j$. 
Define an interval function by $P_{j \bullet} = \sum_{k \neq j} P_{jk}$.
It is worth noting that
\begin{equation}
\begin{aligned}
    p_j(u) &= p_j(s) + \sum_{k \neq j}Q_{kj}((s,u]) - \sum_{k \neq j}Q_{jk}((s,u]) \\
    & \geq p_j(s)(1- P_{j\bullet}((s,u]))
\end{aligned}
\end{equation}
since $P_{j \bullet}((s,u]) = p_j(s)^{-1} \sum_{k \neq j} Q_{jk}((s,u])$.
Iterating on this inequality leads to
\begin{equation} \label{eq:ineq_iter}
    p_j(t) \geq p_j(s) \prod_{i=1}^m (1-P_{j\bullet}((t_{i-1},t_i]))
\end{equation}
for any partition at points $s = t_0 < t_1 < \dots < t_m = t$. 
In particular, the inequality holds in the limit over refinements when it exists.
In similarity to the result of Lemma~\ref{lemma:pos_set}, we have $p_j(u) \geq \varepsilon$ for all $u \in (s,t]$ for some $\varepsilon > 0$. 
By Theorem~\ref{theorem:integral} with the regulated function $u \mapsto p_j(u-)$ and the interval function $\sum_{k \neq j} Q_{jk}$ with variation and strict additive transform given by $\sum_{k \neq j} F_{jk}$, we obtain that $\Lambda_{j\bullet}$ is the strict additive transform of $P_{j\bullet}$ on $(s,t]$ and then by Theorem~\ref{theorem:additive_multiplicative} that the limit over refinements of $(s,t]$, $\lim \prod_{i=1}^m (1-P_{j\bullet}((t_{i-1},t_i]))$ is $\prodi_s^t(1- \Lambda_{j\bullet}(\d u))$. 
The inequality $p_j(t) \geq p_j(s) \prodi_s^t(1- \Lambda_{j\bullet}(\d u))$ now follows from \eqref{eq:ineq_iter}.
\end{proof}

\begin{lemma} \label{lemma:Lambda_extremes}
If $[s,t) \subseteq J_{j+}$ with $t \notin J_{j+}$ for $t < \infty$ then $\sum_{k \neq j}\Lambda_{jk}((s,t)) = \infty$ if $p_j(t-) = 0$ or $\sum_{k \neq j}\Lambda_{jk}([t,t]) = 1$ if $p_j(t-) > 0$.
\end{lemma}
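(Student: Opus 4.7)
The approach is to split on the dichotomy in the conclusion.

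For the case $p_j(t-) > 0$: the hypothesis $t \notin J_{j+}$ forces $p_j(t) = 0$. The decomposition
\[
p_j(t) = p_j(t-) - \sum_{k \neq j} \P(U(t) = k, U(t-) = j) + \sum_{k \neq j} \P(U(t) = j, U(t-) = k),
\]
combined with the non-negativity of each summand and the bound $\sum_{k \neq j} \P(U(t) = k, U(t-) = j) \leq p_j(t-)$, forces equality in that bound and $\P(U(t) = j, U(t-) \neq j) = 0$. Dividing by $p_j(t-)$ yields $\sum_{k \neq j} \Lambda_{jk}([t,t]) = 1$.

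For the case $p_j(t-) = 0$, I argue by contradiction, assuming $\Lambda_{j\bullet}((s,t)) := \sum_{k \neq j} \Lambda_{jk}((s,t)) < \infty$. Since $\Lambda_{j\bullet}$ is non-negative and additive with finite total mass on $(s,t)$, only finitely many of its atoms in $(s,t)$ can have mass at least $1/2$; let $u_0$ denote the largest such atom point, or set $u_0 = s$ if none exists. Then $u_0 \in [s,t) \subseteq J_{j+}$ ensures $p_j(u_0) > 0$, and every atom of $\Lambda_{j\bullet}$ in $(u_0, t)$ has mass strictly less than $1/2$. Applying Lemma~\ref{lemma:pj_inequality} on $[u_0, r]$ for each $r \in [u_0, t)$ yields
\[
p_j(r) \geq p_j(u_0) \Prodi_{u_0}^r(1 - \Lambda_{j\bullet}(\d v)).
\]
The standard factorization of the product integral into the exponential of its continuous part times a product over its atoms, combined with the elementary bound $1 - x \geq e^{-2x}$ on $[0, 1/2]$, gives the uniform lower bound $\Prodi_{u_0}^r(1 - \Lambda_{j\bullet}(\d v)) \geq \exp(-2 \Lambda_{j\bullet}((u_0, t))) > 0$. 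Hence $p_j(r)$ is bounded below by a strictly positive constant on $[u_0, t)$, contradicting $p_j(t-) = 0$.

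The main obstacle is the possibility that $\Lambda_{j\bullet}$ has an atom of mass $1$ at a point of $J_{j+}$, which can in principle occur if all of state $j$'s mass transitions out of $j$ at a deterministic time while other states simultaneously transition into $j$. A single application of Lemma~\ref{lemma:pj_inequality} on the whole of $(s,t)$ would then yield only the vacuous bound $p_j(t-) \geq 0$. The restart at the last atom of size at least $1/2$, which is legitimate because the contradiction hypothesis ensures there are only finitely many such atoms, is what lets the exponential lower bound take effect and drive the argument to its conclusion.
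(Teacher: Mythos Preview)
Your proof is correct and follows essentially the same strategy as the paper's: both cases hinge on Lemma~\ref{lemma:pj_inequality} (or the computation embedded in its proof) together with the continuous/discrete factorisation of the product integral. Your treatment of the case $p_j(t-)>0$ is exactly the computation the paper invokes by reference to the proof of Lemma~\ref{lemma:pj_inequality}.

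Where the two arguments differ is in the handling of large atoms in the case $p_j(t-)=0$. The paper argues directly: $p_j(t-)=0$ forces $\prodi_s^{t-}(1-\Lambda_{j\bullet}(\d u))=0$, and then asserts that since $[s,t)\subseteq J_{j+}$ every atom of $\Lambda_{j\bullet}$ in $(s,t)$ has mass strictly below $1$, whence the vanishing of the product integral forces $\Lambda_{j\bullet}((s,t))=\infty$. As you correctly observe, the implication ``$u\in J_{j+}\Rightarrow\Lambda_{j\bullet}(\{u\})<1$'' is not obvious in general, because all mass can leave state $j$ at a deterministic time while fresh mass simultaneously enters from other states. Your restart device---moving the left endpoint to the last atom of mass at least $1/2$, which is legitimate under the finiteness hypothesis---sidesteps this issue entirely and yields a clean exponential lower bound via $1-x\geq e^{-2x}$ on $[0,1/2]$. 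This is a genuine improvement in rigour over the paper's version, at the cost of a couple of extra lines; the underlying mechanism (Lemma~\ref{lemma:pj_inequality} plus the factorisation of the product integral) is the same.
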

\begin{proof}
Lemma~\ref{lemma:pj_inequality} and its proof reveal how $\prodi_s^{t-} (1-\sum_{k \neq j} \Lambda_{jk}(\d u)) = 0$ if $p_j(t-) = 0$ or $\sum_{k \neq j} \Lambda_{jk}([t,t]) = 1$ if $p_j(t-) > 0$. If $\prodi_s^{t-} (1-\sum_{k \neq j} \Lambda_{jk}(\d u)) = 0$ with $\sum_{k \neq j} \Lambda_{jk}([u,u]) < 1$ for all $u \in (s,t)$ as in this case since $[s,t) \subseteq J_{j+}$, then $\sum_{k \neq j}\Lambda_{jk}((s,t)) = \infty$, which can be seen by appealing to the decomposition of $\Lambda_{jk}$ into its continuous and discrete parts as in Definition~4 of \cite{Gill1990}. Essentially, we have $\prod_{u \in (s,t)}(1-\sum_{k \neq j}\Lambda_{jk}([u,u])) = 0$ or $\exp(-\sum_{k \neq j}\Lambda_{jk}((s,t))) = 0$, and either scenario implies ${\sum_{k \neq j}\Lambda_{jk}((s,t)) = \infty}$. 
This proves the lemma.
\end{proof}


\begin{thebibliography}{9}
\expandafter\ifx\csname natexlab\endcsname\relax\def\natexlab#1{#1}\fi
\providecommand{\url}[1]{\texttt{#1}}
\providecommand{\href}[2]{#2}
\providecommand{\path}[1]{#1}
\providecommand{\DOIprefix}{doi:}
\providecommand{\ArXivprefix}{arXiv:}
\providecommand{\URLprefix}{URL: }
\providecommand{\Pubmedprefix}{pmid:}
\providecommand{\doi}[1]{\href{http://dx.doi.org/#1}{\path{#1}}}
\providecommand{\Pubmed}[1]{\href{pmid:#1}{\path{#1}}}
\providecommand{\bibinfo}[2]{#2}
\ifx\xfnm\relax \def\xfnm[#1]{\unskip,\space#1}\fi
\bibitem[{Aalen et~al.(2001)Aalen, Borgan and Fekj{\ae}r}]{Aalen2001}
\bibinfo{author}{Aalen, O.O.}, \bibinfo{author}{Borgan, {\O}.},
  \bibinfo{author}{Fekj{\ae}r, H.}, \bibinfo{year}{2001}.
\newblock \bibinfo{title}{Covariate adjustment of event histories estimated
  from {M}arkov chains: The additive approach}.
\newblock \bibinfo{journal}{Biometrics} \bibinfo{volume}{57},
  \bibinfo{pages}{993--1001}.
\newblock \URLprefix \url{https://doi.org/10.1111/j.0006-341X.2001.00993.x}.
\bibitem[{Aalen and Johansen(1978)}]{AalenJohansen1978}
\bibinfo{author}{Aalen, O.O.}, \bibinfo{author}{Johansen, S.},
  \bibinfo{year}{1978}.
\newblock \bibinfo{title}{An empirical transition matrix for non-homogeneous
  {M}arkov chains based on censored observations}.
\newblock \bibinfo{journal}{Scandinavian Journal of Statistics}
  \bibinfo{volume}{5}, \bibinfo{pages}{141--150}.
\newblock \URLprefix \url{http://www.jstor.org/stable/4615704}.
\bibitem[{Andersen et~al.(1993)Andersen, Borgan, Gill and
  Keiding}]{Andersen1993}
\bibinfo{author}{Andersen, P.K.}, \bibinfo{author}{Borgan, {\O}.},
  \bibinfo{author}{Gill, R.D.}, \bibinfo{author}{Keiding, N.},
  \bibinfo{year}{1993}.
\newblock \bibinfo{title}{Statistical models based on counting processes}.
\newblock Springer Series in Statistics, \bibinfo{publisher}{Springer-Verlag,
  New York}.
\newblock \URLprefix \url{http://dx.doi.org/10.1007/978-1-4612-4348-9}.
\bibitem[{Datta and Satten(2001)}]{Datta2001}
\bibinfo{author}{Datta, S.}, \bibinfo{author}{Satten, G.A.},
  \bibinfo{year}{2001}.
\newblock \bibinfo{title}{Validity of the {A}alen--{J}ohansen estimators of
  stage occupation probabilities and {N}elson--{A}alen estimators of integrated
  transition hazards for non-{M}arkov models}.
\newblock \bibinfo{journal}{Statistics \& probability letters}
  \bibinfo{volume}{55}, \bibinfo{pages}{403--411}.
\newblock \URLprefix \url{https://doi.org/10.1016/S0167-7152(01)00155-9}.
\bibitem[{Dudley and Norvai{\v{s}}a(2011)}]{Dudley2010}
\bibinfo{author}{Dudley, R.M.}, \bibinfo{author}{Norvai{\v{s}}a, R.},
  \bibinfo{year}{2011}.
\newblock \bibinfo{title}{Concrete functional calculus}.
\newblock Springer Monographs in Mathematics, \bibinfo{publisher}{Springer, New
  York}.
\newblock \URLprefix \url{http://dx.doi.org/10.1007/978-1-4419-6950-7}.
\bibitem[{Gill and Johansen(1990)}]{Gill1990}
\bibinfo{author}{Gill, R.D.}, \bibinfo{author}{Johansen, S.},
  \bibinfo{year}{1990}.
\newblock \bibinfo{title}{A survey of product-integration with a view toward
  application in survival analysis}.
\newblock \bibinfo{journal}{The Annals of Statistics} \bibinfo{volume}{18},
  \bibinfo{pages}{1501--1555}.
\newblock \URLprefix \url{http://dx.doi.org/10.1214/aos/1176347865}.
\bibitem[{Glidden(2002)}]{Glidden2002}
\bibinfo{author}{Glidden, D.V.}, \bibinfo{year}{2002}.
\newblock \bibinfo{title}{Robust inference for event probabilities with
  non-{M}arkov event data}.
\newblock \bibinfo{journal}{Biometrics} \bibinfo{volume}{58},
  \bibinfo{pages}{361--368}.
\newblock \URLprefix \url{https://doi.org/10.1111/j.0006-341X.2002.00361.x}.
\bibitem[{{Overgaard}(2019)}]{overgaard2019counting}
\bibinfo{author}{{Overgaard}, M.}, \bibinfo{year}{2019}.
\newblock \bibinfo{title}{{Counting processes in {$p$}-variation with
  applications to recurrent events}}.
\newblock \URLprefix \url{https://arxiv.org/abs/1903.04296}.
  \bibinfo{note}{{Unpublished manuscript}}.
\bibitem[{Overgaard and Hansen(2019)}]{overgaard2019assumption}
\bibinfo{author}{Overgaard, M.}, \bibinfo{author}{Hansen, S.N.},
  \bibinfo{year}{2019}.
\newblock \bibinfo{title}{On the assumption of independent right censoring}.
\newblock \URLprefix \url{https://arxiv.org/abs/1905.02508}.
  \bibinfo{note}{{Unpublished manuscript}}.

\end{thebibliography}
\end{document}